\numberwithin{equation}{section}
\newtheorem{thm}{Theorem}[section]
\newtheorem{lem}[thm]{Lemma}
\newtheorem{cor}[thm]{Corollary}
\newtheorem{prop}[thm]{Proposition}
\newtheorem{exa}{Example}
\newtheorem{oppr}{Open Problem}
\theoremstyle{definition}
\newtheorem{rem}[thm]{Remark}
\theoremstyle{remark}
\newcommand{\ds}{\displaystyle}
\date{}
\title{Comparison results for solutions to the anisotropic Laplacian with Robin boundary conditions}
\author[R. Sannipoli]{Rossano Sannipoli}
\address{Dipartimento di Matematica e Applicazioni ``R. Caccioppoli'', Universit\`a degli studi di Napoli Federico II \\ Via Cintia, Complesso Universitario Monte S. Angelo, 80126 Napoli, Italy.}
\email{rossano.sannipoli@unina.it}
\begin{document}
\maketitle 
\markright{COMPARISON RESULTS FOR SOLUTIONS TO THE ANISOTROPIC LAPLACIAN}
\begin{abstract}
	
	In this paper we consider PDE's problems involving the anisotropic Laplacian operator, with Robin boundary conditions. By means of Talenti techniques, widely used in the last decades, we prove a comparison result between the solutions of the above-mentioned problems and the solutions of the symmetrized ones. As a consequence of these results, a Bossel-Daners type inequality can be shown in dimension 2.
	%In this paper we study some properties of the torsion function with Robin boundary conditions. Here we write the shape derivative of the $L^{\infty}$ and $L^p$ norms, for $p\ge 1$, of the torsion function, seen as a functional on a bounded simply connected domain $\Omega \in \mathbb{R}^n$, and prove that the balls are a critical shape for these functionals, when the volume is preserved.
	\\
	\textsc{MSC 2010:} 	35B51 - 35G20 - 35G30 - 35J60 - 35P15\\
	%35B51 Comparison principles in context of PDEs
	%35J60 Nonlinear elliptic equations
	%35G20 Nonlinear higher order PDE's
	%35G30 Boundary value problems for nonlinear higher-order PDEs 
	%35P15 Estimates of eigenvalues in context of PDE's
	\textsc{Keywords:} Anisotropic Laplacian - Robin boundary conditions - Talenti comparison result - Bossel-Daners inequality
\end{abstract}
\section{Introduction}
Let $\Omega\subset \mathbb{R}^n$ be an open bounded set, with Lipschitz boundary. Let us consider the following anisotropic problem with Robin boundary conditions
\begin{equation} \label{ATPR}
\begin{cases}
-\mathrm{div} (H(\nabla u)H_{\xi}(\nabla u))=f & \mbox{in}\ \Omega\vspace{0.2cm}\\
H(\nabla u)H_{\xi}(\nabla u)\cdot \nu +\beta H(\nu) u=0&\mbox{on}\ \partial \Omega\vspace{0.2cm},
\end{cases}
\end{equation}
where $f\ge0$ (not identically zero) belongs to $L^2(\Omega)$, $H$ is a sufficiently smooth norm in $\mathbb{R}^n$, $\nu$ is the outer unit normal to $\partial\Omega$ and $\beta > 0$ is a positive real parameter.\\
A weak solution to problem \eqref{ATPR} is a function $u\in H^1(\Omega)$ that satisfies
\begin{equation} \label{ELE}
\int_{\Omega} H(\nabla u)H_{\xi}(\nabla u) \cdot \nabla \varphi \,dx + \beta\int_{\partial\Omega} H(\nu)u\varphi \,d\mathcal{H}^{n-1} = \int_{\Omega} f \varphi \,\,\,\,\,\,\,\,\,\,\,\,\,\, \forall \varphi \in H^1(\Omega).
\end{equation}
The aim of the paper is to estabilish a comparison result with the solution to the following symmetrized problem
\begin{equation} \label{SATPR}
\begin{cases}
-\mathrm{div} (H(\nabla v)H_{\xi}(\nabla v))=f^{\star} & \mbox{in}\ \Omega^{\star}\vspace{0.2cm}\\
H(\nabla v)H_{\xi}(\nabla v)\cdot \nu +\beta H(\nu) v=0&\mbox{on}\ \partial \Omega^{\star}\vspace{0.2cm},
\end{cases}
\end{equation}
where $f^{\star}$ is the convex symmetrization of $f$ and $\Omega^{\star}$ is a set homothetic to the Wulff Shape $\mathcal{W}$ such that $|\Omega^{\star}|=|\Omega|$ (for the exact definitions, see section 2).\\ \\
It is well known that, when $H$ is the euclidean norm in $\mathbb{R}^n$, then the convex symmetrization coincides with the Schwarz symmetrization (to know more about Schwarz symmetrization see \cite{K}). Moreover if the Robin boundary condition is replaced by the Dirichlet boundary condition, problem \eqref{ATPR} becomes
\begin{equation} \label{TAL}
	\begin{cases}
	-\Delta u = f & \mbox{in} \,\,\, \Omega \\
	u = 0 & \mbox{on} \,\,\, \partial \Omega,
	\end{cases}
\end{equation}
with $f\in L^2(\Omega)$ (non-negative and not identically zero) and $\Omega$ an open subset of $\mathbb{R}^n$. This problem has been widely studied, with Talenti being the pioneer in this direction. Indeed the author (see \cite{T}) proved, via rearrangements arguments, that the Schwarz symmetrization of the solution to problem \eqref{TAL} is pointwise bounded  by the solution to the following symmetrized problem
\begin{equation}
	\begin{cases}
	-\Delta v = f^{\sharp} & \mbox{in} \,\,\, \Omega^{\sharp} \\
	v = 0 & \mbox{on} \,\,\, \partial \Omega^{\sharp},
	\end{cases}
\end{equation}
with $f^{\sharp}$ being the Schwarz decreasing rearrangement of $f$ and $\Omega^{\sharp}$ the ball centered at the origin having the same measure as $\Omega$. 
%To know more on rearrangements of a function see [K].
Talenti, with his techniques, gave birth to a series of generalizations and results that still now take his name. For instance see \cite{AFLT, ALT, T1, T2} for generalizations to other kind of operators.\\
When we have Robin boundary conditions with positive parameter, problem \eqref{TAL} becomes
\begin{equation} \label{TALROB}
\begin{cases}
-\Delta u = f & \mbox{in} \,\,\, \Omega \\
\ds \frac{\partial u}{\partial \nu} + \beta u = 0 & \mbox{on} \,\,\, \partial \Omega.
\end{cases}
\end{equation}
To our knowledge, in literature, there are few comparison results \'a la Talenti for this kind of problem. A result of this type has been proved only recently by \cite{ANT}, where they have highlighted the importance of the dependence on the dimension of the space. The authors, infact, managed to compare the Lorentz norm (see \cite{L}) of the solution to problem \eqref{TALROB} with that of the solution to the symmetrized problem
\begin{equation} \label{TALROBS}
	\begin{cases}
	-\Delta v = f^{\sharp} & \mbox{in} \,\,\, \Omega^{\sharp} \\
	\ds \frac{\partial v}{\partial \nu} + \beta u = 0 & \mbox{on} \,\,\, \partial \Omega^{\sharp},
	\end{cases}
\end{equation}
where the exponents of these norms depend on the dimension of the space. In particular they proved, for $n\ge 2$, that 
\begin{equation*}
\|u\|_{L^{p,1}(\Omega)} \le \|v\|_{L^{p,1}(\Omega^{\sharp})} \,\,\,\,\,\,\, \text{for all} \,\,\, 0<p \le \frac{n}{2n-2}
\end{equation*} 
and
\begin{equation*}
\|u\|_{L^{2p,2}(\Omega)} \le \|v\|_{L^{2p,2}(\Omega^{\sharp})}
\,\,\,\,\,\,\, \text{for all} \,\,\, 0<p \le \frac{n}{3n-4},
\end{equation*}
with $u$ solution to \eqref{TALROB} and $v$ to \eqref{TALROBS}. Moreover, when $f \equiv 1$ in $\Omega$ and n=2, they showed that
\begin{equation} 
u^{\sharp}(x) \le v(x) \;\;\;\;\; x\in \Omega^{\sharp},
\end{equation}
and, for $n\ge 3$, that 
 \begin{equation*}
 \begin{split}
&\|u\|_{L^{p,1}(\Omega)}\le \|v\|_{L^{p,1}(\Omega^{\sharp})} \\
&\|u\|_{L^{2p,2}(\Omega)}\le \|v\|_{L^{2p,2}(\Omega^{\sharp})},
\end{split} 
\end{equation*}
for all $0<p\le \frac{n}{n-2}$.\\
With a different approach \cite{BG} proved that 
\begin{equation*}
	\|u\|_{L^1(\Omega)} \le \|v\|_{L^1(\Omega^{\sharp})}.
\end{equation*}
In \cite{S} one can find the computation of the shape derivative of the $L^{\infty}$ and $L^p$ norms of the solution to the problem \eqref{TALROB} and the proof of the stationarity of the ball, when fixing the volume.\\
The main theorems of this paper show that the proves proposed by \cite{ANT} can be adapted even when considering the anisotropic Laplacian operator. Let us now state these results.

\begin{thm} \label{FNM2}
	Let be $n\ge 2$. If $u$ and $v$ are the solutions to problems \eqref{ATPR} and \eqref{SATPR} respectivley, then 
	\begin{equation} \label{P1f}
	\|u\|_{L^{p,1}(\Omega)} \le \|v\|_{L^{p,1}(\Omega^{\star})} \;\;\;\;\;\;\; \text{for all} \;\;\;\;\; 0 < p \le \frac{n}{2n-2}
	\end{equation}
	and
	\begin{equation} \label{2P2f}
	\|u\|_{L^{2p,2}(\Omega)} \le \|v\|_{L^{2p,2}(\Omega^{\star})} \;\;\;\;\;\;\; \text{for all} \;\;\;\;\; 0 < p \le \frac{n}{3n-4}.
	\end{equation}
\end{thm}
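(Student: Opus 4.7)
My plan is to transplant the Alvino--Nitsch--Trombetti strategy of \cite{ANT} to the anisotropic setting, systematically replacing the Euclidean perimeter, ball and Schwarz rearrangement with the anisotropic perimeter $P_H$, the Wulff shape $\W$ and the convex symmetrization $f^\star$. Testing \eqref{ELE} with $\varphi_{t,h}=\min\{(u-t)^+,h\}/h$, using the $1$-homogeneity identity $H(\nabla u)H_\xi(\nabla u)\cdot\nabla u=H(\nabla u)^2$, and sending $h\to 0^+$, I would first derive the differential identity
\begin{equation*}
-\frac{d}{dt}\int_{\{u>t\}} H(\nabla u)^2\,dx \;=\; \int_{\{u>t\}} f\,dx \;-\; \beta\int_{\partial\Omega\cap\{u>t\}} H(\nu)\,u\,d\mathcal{H}^{n-1}.
\end{equation*}
An anisotropic Cauchy--Schwarz on the level set $\{u=t\}$ together with the coarea formula then yields, for $\mu(t):=|\{u>t\}|$, the bound
\begin{equation*}
P_H(\{u>t\};\Omega)^2 \;\le\; (-\mu'(t))\!\left(\int_{\{u>t\}} f\,dx \;-\; \beta\!\int_{\partial\Omega\cap\{u>t\}} H(\nu)\,u\,d\mathcal{H}^{n-1}\right).
\end{equation*}

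Next, I would invoke the Wulff isoperimetric inequality $P_H(E)\ge n|\W|^{1/n}|E|^{(n-1)/n}$, splitting the total anisotropic perimeter $P_H(\{u>t\})$ into its interior and $\partial\Omega$-contributions, and bound the Robin integral from below by $\beta t \int_{\partial\Omega\cap\{u>t\}}H(\nu)\,d\mathcal H^{n-1}$ using $u\ge t$ on the relevant set. For the symmetrized problem \eqref{SATPR}, whose solution $v$ has level sets that are homothets of $\W$, all these inequalities become equalities and give an explicit ODE for $\mu_v(t):=|\{v>t\}|$. Combined with the Hardy--Littlewood inequality $\int_{\{u>t\}} f\,dx\le \int_0^{\mu(t)} f^\star(s)\,ds$, this produces a first-order differential inequality coupling $\mu$ and $\mu_v$ which, as in \cite{ANT}, is then converted, after multiplication by an appropriate power of $\mu$ and integration in $t$, into an integral estimate for the decreasing rearrangement $u^\star$. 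The dimensional balance of the interior isoperimetric exponent $(n-1)/n$ against the boundary exponent $1$ determines precisely the thresholds $p\le n/(2n-2)$ for \eqref{P1f} and $p\le n/(3n-4)$ for \eqref{2P2f}.

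The main obstacle is the Robin boundary term $\beta\int_{\partial\Omega\cap\{u>t\}}H(\nu)u\,d\mathcal H^{n-1}$, which does not vanish and prevents the pointwise comparison $u^\star(x)\le v(x)$ available in the Dirichlet case. Its sign in the comparison with the symmetrized problem depends on whether $t$ is smaller or larger than $u_m:=\essinf_{\partial\Omega} u$, so the analysis must be split in these two regimes and then recombined; this is the heart of the argument and the reason why only Lorentz (and not Lebesgue) estimates can be hoped for beyond $n=2$. A secondary technical point is that every geometric tool---Cauchy--Schwarz on level sets, the coarea formula, and the isoperimetric inequality---must be used in its anisotropic form, which relies on the smoothness and strict convexity of $H$ recalled in Section~2 but does not introduce qualitatively new phenomena with respect to \cite{ANT}.
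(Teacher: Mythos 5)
Your overall plan (transplant Alvino--Nitsch--Trombetti to the anisotropic setting, using $P_H$, the Wulff shape and convex symmetrization) is the right one, and you correctly identify that the thresholds for $p$ come from the interplay between the isoperimetric exponent and the boundary term, and that a Gronwall-type integration in $t$ is the closing step. But there is a genuine gap in the way you set up the Cauchy--Schwarz step, and it is exactly the part of the argument that makes the whole thing go through.

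You apply Cauchy--Schwarz only on the interior part of the level boundary, obtaining a bound on $P_H(\{u>t\};\Omega)^2$ with the Robin contribution $-\beta\int_{\partial\Omega\cap\{u>t\}}H(\nu)\,u\,d\mathcal H^{n-1}$ sitting on the right with a minus sign. This has two defects. First, the anisotropic isoperimetric inequality controls the \emph{total} perimeter $P_H(U_t)=P_H(U_t;\Omega)+\int_{\partial U_t\cap\partial\Omega}H(\nu)\,d\mathcal H^{n-1}$, not the relative one, so your inequality does not interface with $P_H(E)\ge n k_n^{1/n}|E|^{(n-1)/n}$ without further work (you would need $P_H(U_t;\Omega)\ge n k_n^{1/n}\mu^{(n-1)/n}-\int_{\partial U_t\cap\partial\Omega}H(\nu)$, whose sign is uncontrolled, forcing the regime split you allude to). Second, the paper (following ANT) uses a \emph{weighted} Cauchy--Schwarz on the full boundary $\partial U_t$, writing $P_H(U_t)=\int_{\partial U_t}\sqrt{hg}\sqrt{h/g}$ with $g=H(\nabla u)$ on the interior part and $g=\beta u$ on the exterior part; this choice is designed so that $\int_{\partial U_t}hg\,d\mathcal H^{n-1}=\int_{U_t}f$ exactly, and it yields
\begin{equation*}
P_H(U_t)^2\le\left(-\mu'(t)+\frac{1}{\beta}\int_{\partial U_t\cap\partial\Omega}\frac{H(\nu)}{u}\,d\mathcal H^{n-1}\right)\int_0^{\mu(t)}f^*(s)\,ds,
\end{equation*}
i.e.\ the Robin term appears with a \emph{plus} sign and with $u$ in the \emph{denominator}, as a boost to $-\mu'$. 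This is essential: the companion estimate (the paper's second lemma, obtained by Fubini) shows $\int_0^t\tau\bigl(\int_{\partial U_\tau\cap\partial\Omega}\frac{H(\nu)}{u}\,d\mathcal H^{n-1}\bigr)\,d\tau\le\frac{1}{2\beta}\int_0^{|\Omega|}f^*(s)\,ds$ for all $t\ge v_{\min}$, with equality for $v$, and this uniform cumulative bound is what the Gronwall argument consumes. Your bound $\int H(\nu)u\ge t\int H(\nu)$ points in the opposite direction and does not produce such a universal estimate. In short: the step you sketch is not the one the proof needs, and without the weighted Cauchy--Schwarz (and the Fubini lemma it enables) the argument does not close. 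Once that single step is corrected, the rest of your outline (multiply by $t\mu(t)^\eta$ with $\eta=\tfrac1p-\tfrac{2n-2}{n}$, integrate, apply Gronwall with $\tau_0=v_{\min}$, and repeat with the weight $tF(\mu(t))\mu(t)^{-(2n-2)/n}$ for the $L^{2p,2}$ bound) matches the paper.
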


\begin{thm} \label{N2}
	Let $n=2$, $f\equiv 1$ in $\Omega$. If $u$ and $v$ are the solutions to problems \eqref{ATPR} and \eqref{SATPR} respectivley. Then
	\begin{equation}
	u^{\star}(x) \le v(x) \,\,\,\,\, x \in \Omega^{\star},
	\end{equation}
where $u^{\star}$ is the convex symmetrization of $u$.
\end{thm}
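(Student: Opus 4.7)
The plan is to follow the Talenti-type rearrangement strategy, adapted by Alvino--Nitsch--Trombetti to the Robin setting, and to transplant it to the anisotropic framework by means of the convex symmetrization and the Wulff isoperimetric inequality.

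First I would reduce the pointwise claim $u^{\star}(x)\le v(x)$ in $\Omega^{\star}$ to the equivalent distributional inequality $\mu_u(t)\le \mu_v(t)$ for every $t\ge 0$, where $\mu_u(t)=|\{u>t\}|$ and $\mu_v(t)=|\{v>t\}|$. Since $\Omega^{\star}$ is a Wulff shape and $v$ is constant on Wulff shapes centred at the origin, $v$ coincides with its own convex rearrangement, so this reduction is immediate.

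Next I would derive a differential inequality for $\mu_u$. Integrating the PDE \eqref{ATPR} over $\{u>t\}$ and using the divergence theorem, the homogeneity identity $H_{\xi}(\xi)\cdot\xi=H(\xi)$, the Robin boundary condition, and the hypothesis $f\equiv 1$, one obtains, for a.e.~$t$,
\begin{equation*}
\int_{\{u=t\}}\frac{H(\nabla u)^{2}}{|\nabla u|}\,d\mathcal{H}^{n-1}+\beta\int_{\partial\Omega\cap\{u>t\}}H(\nu)\,u\,d\mathcal{H}^{n-1}=\mu_u(t).
\end{equation*}
I would then combine this with: (i) Cauchy--Schwarz on the level set $\{u=t\}$ between the integrands $H(\nabla u)/|\nabla u|$ and $1/|\nabla u|$; (ii) the anisotropic co-area formula $-\mu_u'(t)=\int_{\{u=t\}}|\nabla u|^{-1}\,d\mathcal{H}^{n-1}$; (iii) the decomposition $P_H(\{u>t\})=\int_{\{u=t\}}H(\nabla u)/|\nabla u|\,d\mathcal{H}^{1}+\int_{\partial\Omega\cap\{u>t\}}H(\nu)\,d\mathcal{H}^{1}$; (iv) the Wulff isoperimetric inequality, which for $n=2$ reads $P_H(\{u>t\})\ge 2\sqrt{|\mathcal{W}|\,\mu_u(t)}$; and (v) the trivial pointwise bound $u\ge t$ on $\partial\Omega\cap\{u>t\}$. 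These ingredients fuse into a first-order differential inequality in $t$ relating $\mu_u$, $-\mu_u'$, and the boundary anisotropic perimeter seen by the level sets.

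Finally I would run the same chain of manipulations on $v$: Cauchy--Schwarz is sharp because $v$ is constant on Wulff-shape level sets, the Wulff isoperimetric inequality is sharp on Wulff shapes, and the bound $u\ge t$ becomes an equality on $\partial\Omega^{\star}\cap\{v>t\}$ because $v$ is constant on $\partial\Omega^{\star}$. Hence $\mu_v$ satisfies the corresponding ordinary differential equation. A standard comparison argument for ODEs, together with the matching $\mu_u(\esssup u)=0$ (or the conservation law obtained by integrating the PDE over the whole of $\Omega$, which equates the total fluxes through $\partial\Omega$ and $\partial\Omega^{\star}$), yields $\mu_u(t)\le\mu_v(t)$ for every $t$, and hence $u^{\star}\le v$.

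The main obstacle is the simultaneous control of the two contributions to the perimeter of $\{u>t\}$, namely the interior level set $\{u=t\}$ and the boundary portion $\partial\Omega\cap\{u>t\}$, in the presence of the Robin boundary integral. Unlike the Dirichlet situation, where $\{u>t\}$ stays strictly inside $\Omega$ and the boundary term drops out, here the two must be tracked together, and one must further distinguish the regime $t<\essinf_{\partial\Omega} u$, where $\partial\Omega\cap\{u>t\}=\partial\Omega$, from the regime above, where this set shrinks and eventually empties. The exponent $1/2$ in the $n=2$ Wulff inequality matches precisely the linear-in-$\mu$ right-hand side produced by $f\equiv 1$, and it is this scaling alignment that makes the final inequality closable into a single ODE and promotable to a pointwise comparison; for $n\ge 3$ the scaling mismatch is exactly what forces Theorem \ref{FNM2} to settle for Lorentz-norm comparisons in restricted exponent ranges.
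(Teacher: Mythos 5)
Your overall roadmap---reduce to a distribution-function comparison, derive a level-set identity from the weak formulation, and invoke Cauchy--Schwarz, the co-area formula, and the anisotropic isoperimetric inequality---matches the strategy of the paper. The gap is in ingredient (v): replacing $u$ by $t$ on $\partial\Omega\cap\{u>t\}$ produces a differential inequality that still contains the unknown external anisotropic perimeter $P_{\text{ext}}(t)=\int_{\partial\Omega\cap\{u>t\}}H(\nu)\,d\mathcal{H}^1$, which admits no pointwise bound in terms of $\mu_u$ or $-\mu_u'$. You acknowledge this dependence, but then appeal to a ``standard comparison argument for ODEs,'' which cannot be applied to an inequality containing a free, uncontrolled function of $t$; and the conservation law $\int_{\partial\Omega}H(\nu)\,u\,d\mathcal{H}^1=|\Omega|/\beta$ is a single scalar identity that by itself does not eliminate $P_{\text{ext}}(t)$ from the comparison.

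What the paper does instead is apply Cauchy--Schwarz on the \emph{whole} topological boundary $\partial U_t$ with the split weight $g=H(\nabla u)$ on $\partial U^{\text{int}}_t$ and $g=\beta u$ on $\partial U^{\text{ext}}_t$, so that the resulting inequality \eqref{MUf1} retains the term $\frac{1}{\beta}\int_{\partial U^{\text{ext}}_t}\frac{H(\nu)}{u}\,d\mathcal{H}^1$ intact rather than bounding it crudely by $u\ge t$. The decisive extra step---absent from your plan---is then to multiply \eqref{MUf1} by the weight $t$ and integrate from $0$ to $\tau\ge v_{\min}$: by the Fubini computation of Lemma~\ref{LEM3}, the $t$-weighted integral of that boundary term equals $|\Omega|/(2\beta^2)$ for $v$ and is at most this for $u$, so the boundary contributions become a matching global constant and drop out of the comparison. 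A final integration by parts together with Remark~\ref{umvm} (to cover $\tau<v_{\min}$, where $\phi(\tau)=|\Omega|$) then yields $\mu(\tau)\le\phi(\tau)$ for all $\tau\ge 0$. Without the $t$-weighted integration and the Fubini identity of Lemma~\ref{LEM3}, the Robin boundary term cannot be tamed, and this is precisely what your sketch leaves unresolved.
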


\begin{thm} \label{F1NM3}
	Let $n\ge 3$ and $f\equiv 1$. If $u$ and $v$ are the solutions to problems \eqref{ATPR} and \eqref{SATPR} respectivley, then 
	\begin{equation} \label{P1}
	\|u\|_{L^{p,1}(\Omega)} \le \|v\|_{L^{p,1}(\Omega^{\star})}
	\end{equation}
	and
	\begin{equation} \label{2P2}
	\|u\|_{L^{2p,2}(\Omega)} \le \|v\|_{L^{2p,2}(\Omega^{\star})},
	\end{equation}
	for all $0 < p \le \frac{n}{n-2}$.
	
\end{thm}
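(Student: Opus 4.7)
The plan is to adapt the proof of the isotropic Robin result in \cite{ANT} to the anisotropic framework by making the following systematic replacements: $|\cdot|$ is replaced by $H$, the Euclidean perimeter by the anisotropic perimeter $P_H$, balls by Wulff shapes, and the Schwarz rearrangement by the convex symmetrization $\cdot^{\star}$. The central algebraic tool is Euler's homogeneity identity $H_{\xi}(\xi)\cdot \xi = H(\xi)$, which yields $H(\nabla u)H_{\xi}(\nabla u)\cdot \nabla u = H(\nabla u)^{2}$ in perfect analogy with $|\nabla u|^{2}$; the central geometric tool is the Wulff isoperimetric inequality $P_{H}(E)\ge n|\W|^{1/n}|E|^{(n-1)/n}$.

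I would carry out the following steps. Denote $\mu(t):=|\{u>t\}|$ and $\phi(t):=|\{v>t\}|$. Inserting $\varphi=(u-t)^{+}$ in the weak formulation \eqref{ELE} with $f\equiv 1$ and using Euler's identity gives
\begin{equation*}
\int_{\{u>t\}} H(\nabla u)^{2}\,dx + \beta\int_{\partial\Omega\cap\{u>t\}} H(\nu)\, u\,(u-t)\,d\mathcal H^{n-1} = \int_{\{u>t\}}(u-t)\,dx.
\end{equation*}
Differentiating in $t$ and using the coarea formula leads to the identity
\begin{equation*}
\int_{\{u=t\}} \frac{H(\nabla u)^{2}}{|\nabla u|}\,d\mathcal H^{n-1} + \beta\int_{\partial\Omega\cap\{u>t\}} H(\nu)\,u\,d\mathcal H^{n-1} = \mu(t).
\end{equation*}
A Cauchy--Schwarz argument on the level set $\{u=t\}$, together with the decomposition
\begin{equation*}
P_{H}(\{u>t\}) = \int_{\{u=t\}} \frac{H(\nabla u)}{|\nabla u|}\,d\mathcal H^{n-1} + \int_{\partial\Omega\cap\{u>t\}} H(\nu)\,d\mathcal H^{n-1}
\end{equation*}
and the Wulff isoperimetric inequality, yields a differential inequality for $\mu(t)$ containing the weighted boundary mass $\int_{\partial\Omega\cap\{u>t\}} H(\nu)\,u\,d\mathcal H^{n-1}$.

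The analogous computation performed for $v$ on $\Omega^{\star}$ becomes an identity at every step, because $v$ depends only on the Minkowski functional of $\W$, so Cauchy--Schwarz and the isoperimetric inequality hold as equalities; this produces a closed ODE for $\phi(t)$. Comparing the two relations and integrating, using the identity $\|u\|_{L^{p,q}(\Omega)} = \|u^{\star}\|_{L^{p,q}(\Omega^{\star})}$, reduces \eqref{P1} and \eqref{2P2} to an ordering of $u^{\star}$ against $v$ in integral form which follows from the ODE comparison.

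The main obstacle, exactly as in \cite{ANT}, is the boundary term $\beta\int_{\partial\Omega\cap\{u>t\}} H(\nu)\,u\,d\mathcal H^{n-1}$: it cannot be dropped as in the Dirichlet setting, and unlike for $v$ it cannot be expressed as a function of $\mu(t)$ alone. One must bound it from above in terms of $\mu$ and of the corresponding symmetrized boundary integral, and it is precisely this estimate that forces the dimensional restriction $0<p\le n/(n-2)$, namely the range in which the auxiliary quantities used to close the comparison ODE are finite. In passing to the anisotropic setting, no extra difficulty is introduced beyond checking that each step interacts correctly with $H$, $\nu$, $\W$ and $\cdot^{\star}$; the regularity of $H$ assumed in Section~2 ensures that the convex symmetrization, the Wulff isoperimetric inequality, and the rearrangement identities behave as in the Euclidean case.
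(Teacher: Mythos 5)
Your outline correctly identifies the skeleton (test function $(u-t)^+$, coarea formula, Cauchy--Schwarz split of the anisotropic perimeter, Wulff isoperimetric inequality, with every step an equality for $v$), which is exactly the paper's Lemma~4.2. But the two mechanisms that actually close the comparison are missing, and the explanation of the $p$-restriction is inaccurate. First, the boundary term is not controlled by ``bounding it in terms of $\mu$ and of the corresponding symmetrized boundary integral''; it is controlled by the exact Fubini identity of Lemma~4.3, namely
\[
\int_0^\infty t\,\Bigl(\int_{\partial U_t^{\mathrm{ext}}}\frac{H(\nu)}{u}\,d\mathcal H^{n-1}\Bigr)\,dt
= \int_{\partial\Omega}\frac{H(\nu)\,u}{2}\,d\mathcal H^{n-1}
= \frac{|\Omega|}{2\beta},
\]
with the same value obtained for $v$ as soon as $t\ge v_{\min}$, so that after integrating against $t\mu(t)^\eta$ the boundary contribution is replaced by an identical constant on both sides. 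Second, the comparison between the resulting $\mu$- and $\phi$-functionals is closed by a Gronwall-type lemma (the paper's Lemma~4.1), applied with $\tau_0=v_{\min}$ and combined with $u_{\min}\le v_{\min}$ and $\mu\le\phi$ on $(0,v_{\min})$ (Remark~4.4); nothing in your sketch produces such a comparison, and ``the ODE comparison'' alone does not give a pointwise ordering of $u^\star$ and $v$ here (that is special to $n=2$). Third, the restriction $0<p\le n/(n-2)$ is not a finiteness condition on auxiliary quantities (the boundary estimate $\frac{|\Omega|}{2\beta}$ is always finite); it is the requirement $\eta=\tfrac1p-\tfrac{n-2}{n}\ge 0$ so that $\mu(t)^\eta\le|\Omega|^\eta$ when pulling it out of the boundary integral (for \eqref{P1}), and the requirement that the second multiplier $G(l)\,l^{-(n-2)/n}$ be nondecreasing (for \eqref{2P2}). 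Without these two ingredients the integration-and-compare strategy does not close.
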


The organization of the paper is the following. In Section 2 the reader can find notations and preliminaries concerning the anisotropic norm, its properties and the convex symmetrization of a function. Section 3 contains results about existence, uniqueness and other properties of the solutions to problem \eqref{ATPR} and \eqref{SATPR}. In section 4 the main theorems are proved. Section 5 is dedicated to the application of these results to prove, in dimension 2, the Bossel-Daners inequality for the anisotropic operator.  Last section will include some counterexamples and problems that still are open.

%Let us consider the following nonnegative energy functional
%\begin{equation*}
%	E[u] = \int_{\Omega^{\star}} H^2(\nabla u)\,dx + \beta\int_{\partial \Omega^{\star}} H(\nu)u^2\,d\mathcal{H}^{n-1} - \int_{\Omega} f u \,dx, \,\,\,\,\, u \in H^1(\Omega^{\star}).
%\end{equation*}
%If $v$ solves problem \eqref{SATPR}, then $v$ minimizes the previous functional.
%\begin{prop}
%	The solution to problem \eqref{SATPR} is unique.
%\end{prop}
%\begin{proof}
%	Let $v,w \in H^1(\Omega^{\star})$ two solutions to problem \eqref{SATPR}. For $t\in (0,1)$ we set
%	\begin{equation*}
%		\eta_t = (tu^2+ (1-t)v^2)^{\frac{1}{2}}.
%	\end{equation*}
%Then 
%\begin{equation*}
%	\nabla \eta_t= 
%\end{equation*}
%\end{proof}
\section{Notations and preliminaries}
%Throughout this paper we will denote by $B_R= \{x\in \mathbb{R}^n : \|x\| < R\}$ the ball centered at the origin with radius $R>0$, where $\|\cdot \|$ is the classical euclidean distance; by $\Omega$ a bounded $C^{2,\alpha}$ and simply connected domain with finite Lebeasgue measure, where $C^{2,\alpha}$ stands for the $\alpha$-H\"olderian space. We denote by $\mathcal{H}^{n-1}$ the $(n-1)$-dimensional Hausdorff measure in $\mathbb{R}^n$ and by $|\cdot|$ the Lebeasgue measure in $\mathbb{R}^n$.\\
\subsection{Anisotropy}
What follows can be found in \cite{AFLT}. Let $H:\mathbb{R}^n \longrightarrow [0,+\infty]$, $n\ge 2$, be a $C^2(\mathbb{R}^n \setminus \{0\})$ convex functions that satisfies the following homogeneity property
\begin{equation} \label{HP}
H(t\xi)= |t| H(\xi) \,\,\,\,\,\, \forall \xi \in \mathbb{R}^n \, , \,\, \forall t \in \mathbb{R},
\end{equation}
and such that
\begin{equation} \label{control}
\gamma |\xi | \le H(\xi) \le \delta |\xi|,
\end{equation}
for some positive constants $\gamma \le \delta$. Because of \eqref{HP}, we can assume that the set
\begin{equation*}
K = \{\xi \in \mathbb{R}^n : H(\xi)\le 1 \}
\end{equation*}
is such that $|K|$ is equal to the measure $\omega_n$ of the unit sphere in $\mathbb{R}^n$. We can define the support function of $K$ as
\begin{equation}
H^{\circ}(x)= \sup_{\xi \in K} \left< x , \xi \right>,
\end{equation} 
where $\left< \cdot , \cdot \right>$ denotes the scalar product in $\mathbb{R}^n$. $H^{\circ}:\mathbb{R}^n \longrightarrow [0,+\infty]$ is a convex, homogeneous function. Moreover $H$ and $H^{\circ}$ are polar to each other, in the sense that
\begin{equation*}
H^{\circ}(x) = \sup_{\xi \neq 0 } \frac{\left< x , \xi \right>}{H(\xi)}
\end{equation*}
and
\begin{equation*}
H(x) = \sup_{\xi \neq 0 } \frac{\left< x , \xi \right>}{H^{\circ}(\xi)}.
\end{equation*}
$H^{\circ}$ is the support function of the set
\begin{equation*}
K^{\circ} = \{x \in \mathbb{R}^n : H^{\circ}(x)\le 1 \}.
\end{equation*}
The set $\mathcal{W}=\{x \in \mathbb{R}^n : H^{\circ}(x)< 1 \}$ is the so-called Wulff shape centered at the origin. We set $k_n = |\mathcal{W}|$. More generally we will denote by $\mathcal{W}_R(x_0)$ the Wulff shape centered in $x_0 \in \mathbb{R}^n$ with measure $k_n R^n$ the set $R\mathcal{W}+x_0$, and $\mathcal{W}_R(0)=\mathcal{W}_R$.\\
$H$ and $H^{\circ}$ satisfy the following properties:
\begin{equation} \label{prodsc}
H_{\xi}(\xi)\cdot \xi = H(\xi), \;\;\;\; H^{\circ}_{\xi}(\xi)\cdot \xi = H^{\circ}(\xi),
\end{equation} 
\begin{equation}
H(H^{\circ}_{\xi}(\xi))= H^{\circ}(H_{\xi}(\xi))=1 \,\,\,\,\, \forall \xi \in \mathbb{R}^n \setminus \{0\},
\end{equation} 
\begin{equation}
H^{\circ}(\xi)H_{\xi}(H^{\circ}_{\xi}(\xi))=H(\xi)H^{\circ}_{\xi}(H_{\xi}(\xi)) = \xi \,\,\, \forall \xi \in \mathbb{R}^n \setminus \{0\}.
\end{equation}
If $\Omega \subset \mathbb{R}^n$ is an open bounded set with Lipschitz boundary and $E$ is an open subset of $\mathbb{R}^n$, we can give a generalized definition of perimeter of $E$ with respect to the anisotropic norm as follows
\begin{equation*}
	P_H(E,\Omega)= \int_{\partial^* E \cap \Omega} H(\nu ^E)\,d\mathcal{H}^{n-1},
\end{equation*}
where $\partial^*E$ is the reduced boundary of $E$ and $\nu^E$ is its outer normal. Clearly, if $E$ is open, bounded and Lipschitz, then the outer unit normal exists almost everywhere and
\begin{equation}
P_H(E,\mathbb{R}^n):=P_H (E) = \int_{\partial E} H(\nu ^E)\,d\mathcal{H}^{n-1}.
\end{equation}
 %We will denote $P_H(E)=P_H(E,\mathbb{R}^n)$. 
By \eqref{control} we have that
\begin{equation*}
\gamma P(E) \le P_H(E)\le \delta P(E).
\end{equation*}
In \cite{AB, AFLT} it is shown that if $u\in W^{1,1}(\Omega)$, then for a.e. $t>0$ 
\begin{equation} \label{AFLTAB}
	-\frac{d}{dt}\int_{\{u>t\}} H(\nabla u)\,dx = P_H(\{u>t\},\Omega) = \int_{\partial^*\{u>t\}\cap \Omega} \frac{H(\nabla u)}{|\nabla u|}\,d\mathcal{H}^{n-1}.
\end{equation}
In particular an isoperimetric inequality for the anisotropic perimeter holds (for instance see \cite{AFLT, Bu, DP, FM})
\begin{equation} \label{IIA}
P_H( E ) \ge n k^{\frac{1}{n}}_n |E|^{1-\frac{1}{n}}.
\end{equation}

\subsection{Convex symmetrization}
Let $f: \Omega \longrightarrow [0,+\infty]$ be a misurable function. The decreasing rearrangement $f^*$ of $f$ is defined as follows
\begin{equation*}
f^*(s)= \inf \{t\ge 0 :  |\{x\in \Omega: |f(x)|>t\}|<s\} \,\,\,\,\,\, s\in [0,|\Omega|],
\end{equation*}
which is the generalized inverse function of the distribution function of $f$. We define the convex symmetrization $f^{\star}$ of $f$ as
\begin{equation*}
f^{\star}(x) = f^*(k_n H^{\circ}(x)^n) \,\,\,\,\,\,\,\, x\in \Omega^{\star}.
\end{equation*}
In particular it is well known that the functions $f$, $f^*$ and $f^{\star}$ are equimisurable, i.e.
\begin{equation*}
|\{f>t\}| = |\{f^* > t\}| = |\{f^{\star}>t\}| \,\,\,\,\, t\ge 0.
\end{equation*}
As a consequence, if $f\in L^p(\Omega)$, $p\ge 1$, then $f^* \in L^p([0,|\Omega|])$, $f^{\star}\in L^p(\Omega^{\star})$ and
\begin{equation*}
\| f\|_{L^p(\Omega)}\| = \| f^* \|_{L^p([0,|\Omega|])} = \|f^{\star}\|_{L^p(\Omega^{\star})}.
\end{equation*}
Moreover the Hardy-Littlewood inequality holds (see \cite{K})
\begin{equation}
\int_{\Omega} |f(x)g(x)| \,dx \le \int_0^{|\Omega|} f^*(s)g^*(s)\,ds.
\end{equation}
So, if we consider $g$ as the characteristic function of the set $\{x\in\Omega : u(x)>t\}$, for some misurable function $u:\Omega \rightarrow \mathbb{R}$ and $t\ge 0$, then we get
\begin{equation}
\int_{\{u>t\}} f(x)\,dx \le \int_0^{\mu(t)} f^*(s)\,ds,
\end{equation}
where, again, $\mu(t)$ is the distribution function of $u$.

\section{Existence, uniqueness and properties of the solution}
In this section we want to prove that the solution to problem \eqref{ATPR} exists, it is unique and non-negative.
\subsection{Existence}
Let us consider the following energy functional
\begin{equation} \label{EF}
E[w]= \frac{1}{2}\int_{\Omega} H^2(\nabla w)\,dx + \frac{\beta}{2}\int_{\partial \Omega} H(\nu) w^2\,d\mathcal{H}^{n-1}- \int_{\Omega} fw\,dx, \,\,\,\,\,\, w \in H^1(\Omega).
\end{equation} 
We notice that the Euler-Lagrange equations of this functional is \eqref{ATPR}, hence if $E[\cdot]$ has minima, there will exist a solution to the considered problem.\\
Let us proceed with the classical Calculus of Variation method to prove the existence of a minimum.\\
\textbf{1) Lower bound}. Let us prove that the functional \eqref{EF} is lower bounded. From \eqref{control} and the generalized Young's inequality we have
\begin{equation*}
\begin{split}
	E[u] &\ge \frac{\gamma^2}{2} \|\nabla u\|^2_{L^2(\Omega)} + \frac{\beta \gamma}{2} \|u\|^2_{L^2(\partial \Omega)}-\frac{\epsilon}{2} \|u\|^2_{L^2(\Omega)} - \frac{1}{2\epsilon} \|f\|^2_{L^2(\Omega)}\\
	& \ge C_1 \bigg(\|\nabla u\|^2_{L^2(\Omega)} +  \|u\|^2_{L^2(\partial \Omega)}\bigg) -\frac{\epsilon}{2} \|u\|^2_{L^2(\Omega)} - \frac{1}{2\epsilon} \|f\|^2_{L^2(\Omega)}\\
	&\ge \bigg(C_2-\frac{\epsilon}{2}\bigg) \|u\|^2_{L^2(\Omega)}-\frac{1}{2\epsilon} \|f\|^2_{L^2(\Omega)}.
\end{split}
\end{equation*}
In the last inequality we have used a Poincaré inequality with trace term (see for instance \cite{BGT}). Here $C_2 = C_2(\beta, \gamma, \Omega)$. If we choose $\epsilon$ small enough, then
\begin{equation*}
	E[u] \ge -\frac{1}{2\epsilon} \|f\|^2_{L^2(\Omega)} > -\infty.
\end{equation*}
We have proved in this way that the functional is bounded from below. Let be
\begin{equation}
	m := \inf_{w\in H^1(\Omega)} E[w].
\end{equation}
and let $\{u_k\} \subset H^1(\Omega)$ be a minimizing sequence, i.e.
\begin{equation*}
	\lim_{k\to \infty} E[u_k]= m.
\end{equation*}
We can suppose that $E[u_k]\le m+1$ for all $k\in \mathbb{N}$.\\
\textbf{2) Compactness and lower semicontinuity}. Using again the generalized Young's inequality and the Poincaré inequality with trace term, we have
\begin{equation*}
\begin{split}
	m+1 &\ge \frac{\gamma^2}{4} \|\nabla u_k\|^2_{L^2(\Omega)}-\frac{\epsilon}{2}\|u_k\|^2_{L^2(\Omega)} \\
	&+\frac{\gamma^2}{4} \|\nabla u_k\|^2_{L^2(\Omega)}+ \frac{\beta \gamma}{2}\|u_k\|^2_{L^2(\partial\Omega)} - \frac{1}{2\epsilon} \|f\|^2_{L^2(\Omega)}\\
	&\ge \frac{\gamma^2}{4} \|\nabla u_k\|^2_{L^2(\Omega)}-\frac{\epsilon}{2}\|u_k\|^2_{L^2(\Omega)} +C_3 \|u_k\|^2_{L^2(\Omega)} -  \frac{1}{2\epsilon} \|f\|^2_{L^2(\Omega)},
\end{split}
\end{equation*}
where $C_3=C_3(\beta, \gamma, \Omega)$. Choosing $\epsilon$ small enough and calling $C_4 = \min(\frac{\gamma^2}{4}, C_3-\frac{\epsilon}{2})$ then
\begin{equation*}
	\|u_k\|_{H^1(\Omega)} \le \frac{m+1}{C_4} +\frac{1}{2\epsilon C_4} \|f\|^2_{L^2(\Omega)}<\infty.
\end{equation*}
Hence $\{u_k\}$ is bounded in $H^1(\Omega)$, so it exists a subsequence $\{u_{k_{j}}\} \subset \{u_k\}$ that converges weakly in $H^1(\Omega)$ and strongly in $L^2(\Omega)$ to a function $u \in H^1(\Omega)$. To simplify the notation let us continue to call the subsequence as $\{u_k\}$.\\
By the strict convexity of the functions $t \longrightarrow t^2$ and $H^2$, we have the following inequalities
\begin{equation} \label{convu2}
	u_k^2 \ge u^2 + 2u (u_k-u)
\end{equation}
and
\begin{equation} \label{convH2}
	H^2(\nabla u_k) \ge H^2(\nabla u) + 2 H(\nabla u)H_{\xi}(\nabla u)\cdot (\nabla u_k -\nabla u).
\end{equation}
By \eqref{convu2} and \eqref{convH2}, we have
\begin{equation*}
\begin{split}	
E[u_k] &\ge \frac{1}{2}\int_{\Omega} H^2(\nabla u)\,dx + \frac{\beta}{2} \int_{\partial \Omega} u^2\,d\mathcal{H}^{n-1} - \int_{\Omega} fu_k\,dx \\
&+ \int_{\Omega} H(\nabla u)H_{\xi}(\nabla u)\cdot (\nabla u_k-\nabla u)\,dx + \int_{\partial \Omega} u(u_k-u)\,d\mathcal{H}^{n-1}.
\end{split}
\end{equation*}
By the weak convergence of $u_k$ in $H^1(\Omega)$ and being $H^1(\Omega)$ compactly embedded in $L^2(\partial \Omega)$, passing to the limit for $k\to \infty$ we have that
\begin{equation*}
	E[u] \le m.
\end{equation*}
Eventually $E[u]=m$ and $u$ is a minimum.

\subsection{Uniqueness and non-negativeness} Let us prove now that the minimum of \eqref{EF} is unique. If $u,v \in H^1(\Omega)$, by the strict convexity of the funcion $H^2$ we know that if $t\in [0,1]$, then
\begin{equation} \label{convexityH2}
	H^2(t\nabla u + (1-t)\nabla v) \le t H^2(\nabla u)+(1-t)H^2(\nabla v).
\end{equation}
The equality occurs if and only if $t=0$ or $t=1$. Analogously
\begin{equation} \label{convexityu2}
	[tu+(1-t)v]^2 \le t u^2 + (1-t)v^2, \,\,\,\, t\in [0,1].
\end{equation}
Let $u\in H^1(\Omega)$ be a minimizer of \eqref{EF} and let us suppose that there exists another minimizer $v \in H^1(\Omega)$, such that $u\neq v$. Hence $E[u]=E[v]=m$. Let us denote by $w=u+v$ and choose $t=\frac{1}{2}$, then by \eqref{convexityH2} and \eqref{convexityu2}
\begin{equation*}
	E[w] < \frac{E[u]}{2} + \frac{E[v]}{2} = m.
\end{equation*}
This fact contraddicts the minimality of $u$ and so the minimum must be unique.\\

Let us now show the non-negativeness of the solution. Let $u$ be the unique minimum of \eqref{EF}, namely $E[u]=m$. If we consider $|u|$, by \eqref{HP}, we get
\begin{equation*}
	H^2(\nabla |u|)= H^2\bigg(\frac{u}{|u|} \nabla u\bigg) = H^2(\nabla u).
\end{equation*}
Hence
\begin{equation*}
\begin{split}
	E[|u|]&= \frac{1}{2}\int_{\Omega} H^2(\nabla |u|)\,dx + \frac{\beta}{2}\int_{\partial \Omega} H(\nu)|u|^2\,d\mathcal{H}^{n-1} -\int_{\Omega} f|u|\,dx \\
	& = m + \int_{\Omega} f(u-|u|)\,dx = m+2\int_{\{u\le 0\}}fu \,\,\le m.
\end{split}
	\end{equation*}
By the uniqueness of the minimizer it must be $u=|u|$ in $\Omega$. Eventually $u\ge 0$ in $\Omega$.

\subsection{The anisotropic radial case}
\begin{comment}If $v$ is a weak solution to problem \eqref{SATPR}, then it is easy to verify that it minimizes the following functional
\begin{equation}
	E[u]= \frac{1}{2}\int_{\Omega} H^2(\nabla u)\,dx + \frac{\beta}{2}\int_{\partial \Omega} H(\nu) u^2\,d\mathcal{H}^{n-1}- \int_{\Omega} fu\,dx.
\end{equation} 
In particular we have that
\begin{prop}
	If $v$ is a solution to problem \eqref{SATPR}, then it is unique. 
\end{prop}
\begin{proof}
	Let us suppose that $v, w$ are two positive minima of \eqref{SATRP}, then $E[v]=E[w]=m$. Let us denote by 
	\begin{equation*}
		\eta_t = (tv^2+(1-t)w^2)^{\frac{1}{2}}.
	\end{equation*}
Hence its gradient is
\begin{equation*}
	\nabla \eta_t = \eta_t^2 (tv\nabla v + (1-t)w\nabla w)
\end{equation*}
\end{proof}
\end{comment}
Let us consider problem \eqref{SATPR}. Because of uniqueness of the solution and the symmetry of the problem we look for a solution of the type $v(x)= v (H^{\circ}(x))$. This function solves the following ODE
\begin{equation} \label{RADIAL}
	\begin{cases}
	-\frac{1}{r^{n-1}}(r^{n-1}v'(r))' = f^*(k_n r^n) & r \in (0,R) \\
	v'(0)=0\\
	v'(R)+\beta v(R)=0,
	\end{cases}
\end{equation}
where $r= H^{\circ}(x)$ and $R$ is such that $\Omega^{\star}= \mathcal{W}_R$ and $|\Omega|=|\Omega^{\star}|$.\\ Integrating the first equation in \eqref{RADIAL}, calling $\tilde{t}=k_nt^n$, we get
\begin{equation*}
	v'(r) = -\frac{1}{r^{n-1}}\int_0^r t^{n-1}f^*(k_n t^n)\,dt + C_1= -\frac{1}{nk_nr^{n-1}}\int_0^{k_nr^n} f^*(\tilde{t})\,d\tilde{t}+ C_1.
\end{equation*}
Since $v'(0)=0$ then $C_1=0$. By denoting $\tilde{s}= k_n s^n$, another integration gives
\begin{equation*}
\begin{split}	v(r)= -\int_0^r \frac{1}{nk_ns^{n-1}}\int_0^{k_ns^n} &f^*(\tilde{t})\,d\tilde{t}\,ds +C_2= \\
& -\int_0^{k_nr^n} \frac{1}{n^2k_n^{\frac{2}{n}}\tilde{s}^{2-\frac{2}{n}}}\int_0^{\tilde{s}} f^*(\tilde{t})\,d\tilde{t}\,d\tilde{s} +C_2.
\end{split}
\end{equation*}
From $v'(R)+\beta v(R)=0$ we compute $C_2$, hence
\begin{equation*}
\begin{split}
	v(r)=-\int_0^{k_nr^n}& \frac{1}{n^2k_n^{\frac{2}{n}}\tilde{s}^{2-\frac{2}{n}}}\int_0^{\tilde{s}} f^*(\tilde{t})\,d\tilde{t}\,d\tilde{s}+\\
	&\int_0^{k_nR^n} \frac{1}{n^2k_n^{\frac{2}{n}}\tilde{s}^{2-\frac{2}{n}}}\int_0^{\tilde{s}} f^*(\tilde{t})\,d\tilde{t}\,d\tilde{s}+\frac{1}{\beta n k_nR^{n-1}}\int_0^{k_nR^n} f^*(\tilde{t})\,d\tilde{t}.
	\end{split}
\end{equation*}
Therefore
\begin{equation} \label{SOLRAD}
	v(r)=\int_{k_nr^n}^{k_nR^n} \frac{1}{n^2k_n^{\frac{2}{n}}\tilde{s}^{2-\frac{2}{n}}}\int_0^{\tilde{s}} f^*(\tilde{t})\,d\tilde{t}\,d\tilde{s}+\frac{1}{\beta n k_nR^{n-1}}\int_0^{k_nR^n} f^*(\tilde{t})\,d\tilde{t}.
\end{equation}
In this way we have shown that the unique solution to problem \eqref{SATPR} is radially symmetric with respect to anisotropic norm and its value on the boundary is given by
\begin{equation*}
	v(R)= \frac{1}{\beta n k_nR^{n-1}}\int_0^{k_nR^n} f^*(\tilde{t})\,d\tilde{t} \ge 0.
\end{equation*}

\begin{rem}
We stress that if $f\equiv 1$ in $\Omega$, then by \eqref{SOLRAD} the solution to problem \eqref{SATPR} can be written explicitly as follows
\begin{equation*}
	v(x) = v(H^{\circ}(x))= \frac{R}{\beta n}+ \frac{1}{2n}(R^2-H^{\circ}(x)^2).
\end{equation*}
The solution is a paraboloid with respect to the anisotropic norm. Morover if $H$ is the euclidean norm in $\mathbb{R}^n$, we step back to the classical torsion problem (or Saint Venant problem) with robin boundary conditions, whose radial solution is a concave paraboloid.
\end{rem}
\subsection{level sets and distribution functions} If $u$ is a solution to problem \eqref{ATPR}, we will denote by 
\begin{equation*}
	U_t = \{x \in \Omega : u(x)> t \}
\end{equation*}
for a non-negative real number $t\ge0$. It is clear that if $t \le u_{\min}$, then $U_t = \Omega$ and that if $t > u_{\max}$, then $U_t = \emptyset$. With $u_{\min}$ and $u_{\max}$ we have denoted the minimum and the maximum of $u$ in $\Omega$. We will denote by 
\begin{equation}
	\partial U^{\text{int}}_t =  \Omega \cap \partial U_t, \,\,\,\,\,\,\,\,\,  \,\,\, \partial U^{\text{ext}}_t = \partial \Omega \cap \partial U_t
\end{equation}
the internal and external boundary of $U_t$ with respect to $\Omega$, and by
\begin{equation}
    \mu(t) = |U_t|
\end{equation}
the distribution function of $u$.\\
If $v$ is solution to problem \eqref{SATPR}, for $t\ge 0$, we will indicate with
\begin{equation*}
	V_t = \{x\in \Omega^{\star}: v(x)>t\}, \,\,\,\,\,\, \phi(t) = |V_t|
\end{equation*}
the superlevel sets and the distribution function of $v$, respectively. Furthermore, for $ 0 \le t \le v_{\min}$, $V_t = \Omega$, while for $v_{\min}< t < v_{\max}$, the superlevel sets $V_t$ are Wulff shapes homothetic to $\Omega^{\star}$ and strictly  contained in it. Again, $v_{\min}$ and $v_{\max}$ are the minimum and the maximum of $v$ in $\Omega^{\star}$.

\section{Main results}
Before proving the main results let us state Gronwall Lemma and prove two others lemmata that will have a central importance for what will follow.

\begin{lem} (Gronwall) \label{LEM1}
	Let $\xi (t)$ be a continuously differentiable function satysfing for some non-negative constant C, the following differential inequality
	\begin{equation*}
		\tau \xi'(\tau) \le \xi (\tau) + C, 
	\end{equation*}
for all $\tau \ge \tau_0 > 0$. Then we have

\begin{equation*}
 \xi(\tau) \le \tau \frac{\xi(\tau_0)+C}{\tau_0} + C
\end{equation*}
and
\begin{equation*}
	\xi'(\tau) \le \frac{\xi(\tau_0)+C}{\tau_0},
\end{equation*}
for all $\tau \ge \tau_0$.

\end{lem}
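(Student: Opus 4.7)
The plan is to find an integrating factor that turns the differential inequality $\tau \xi'(\tau) \le \xi(\tau) + C$ into a monotonicity statement for a single auxiliary function. The natural candidate comes from rewriting the inequality as $\tau \xi'(\tau) - \xi(\tau) \le C$ and dividing by $\tau^2 > 0$, which yields
\begin{equation*}
\frac{\xi'(\tau)}{\tau} - \frac{\xi(\tau)}{\tau^2} \le \frac{C}{\tau^2}.
\end{equation*}
The left side is precisely $\frac{d}{d\tau}\bigl(\xi(\tau)/\tau\bigr)$, and the right side is $-\frac{d}{d\tau}(C/\tau)$ (note the sign works out because $C \ge 0$ only enters via a constant shift). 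Hence
\begin{equation*}
\frac{d}{d\tau}\left(\frac{\xi(\tau) + C}{\tau}\right) \le 0 \qquad \text{for all } \tau \ge \tau_0,
\end{equation*}
so the function $\tau \mapsto (\xi(\tau)+C)/\tau$ is non-increasing on $[\tau_0,\infty)$.

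Next I would integrate this monotonicity, or equivalently just evaluate it at any $\tau \ge \tau_0$, to obtain
\begin{equation*}
\frac{\xi(\tau) + C}{\tau} \le \frac{\xi(\tau_0) + C}{\tau_0}.
\end{equation*}
Multiplying through by $\tau$ and using $-C \le C$ (which holds because $C \ge 0$) gives the first stated conclusion
\begin{equation*}
\xi(\tau) \le \tau\,\frac{\xi(\tau_0) + C}{\tau_0} - C \le \tau\,\frac{\xi(\tau_0) + C}{\tau_0} + C.
\end{equation*}

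For the pointwise bound on $\xi'(\tau)$, I would feed the estimate just obtained back into the original hypothesis: since $\tau \xi'(\tau) \le \xi(\tau) + C$ and we have just shown $\xi(\tau) + C \le \tau\,(\xi(\tau_0)+C)/\tau_0$, dividing by $\tau > 0$ yields
\begin{equation*}
\xi'(\tau) \le \frac{\xi(\tau_0) + C}{\tau_0},
\end{equation*}
which is the second claim.

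There is no real obstacle here; the only idea is to spot the correct integrating factor, namely that $(\xi(\tau)+C)/\tau$ is the quantity whose derivative the hypothesis controls. Once that is identified, both inequalities drop out by monotonicity and a single substitution back into the original inequality.
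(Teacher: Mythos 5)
Your proof is correct. The paper states this Gronwall-type lemma without proof, so there is nothing to compare against, but your argument is the standard one: rewrite the hypothesis so that the integrating factor $1/\tau$ exhibits $\tau \mapsto (\xi(\tau)+C)/\tau$ as non-increasing, evaluate at $\tau_0$, and substitute back into the original inequality for the derivative bound. Every step checks out, and in fact you obtain the slightly sharper bound $\xi(\tau) \le \tau\,(\xi(\tau_0)+C)/\tau_0 - C$, which you then weaken (using $C \ge 0$) to match the stated conclusion. The only cosmetic point is the parenthetical remark about "the sign works out because $C \ge 0$": the identity $C/\tau^2 = -\frac{d}{d\tau}(C/\tau)$ holds for any $C$, and the sign of $C$ is only used later in the step $-C \le C$; you may want to relocate that remark so it does not suggest the monotonicity itself depends on $C \ge 0$.
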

\begin{lem}
	Let $u$ and $v$ be the solutions to problems \eqref{ATPR} and \eqref{SATPR} respectively. Then for a.e. $t>0$ we have
\begin{equation} \label{PHI}
		n^2 k_n^{\frac{2}{n}}\phi (t)^{\frac{2n-2}{n}} = \left( -\phi ' (t) + \frac{1}{\beta}\int_{\partial U^{\text{ext}}_t}\frac{H(\nu)}{u} \,d\mathcal{H}^{n-1}\right) \int_0^{\mu(t)}f^*(s)\,ds
\end{equation}
and
\begin{equation}\label{MU}
		n^2 k_n^{\frac{2}{n}}\mu (t)^{\frac{2n-2}{n}} \le \left( -\mu ' (t) + \frac{1}{\beta}\int_{\partial U^{\text{ext}}_t}\frac{H(\nu)}{u} \,d\mathcal{H}^{n-1}\right) \int_0^{\mu(t)}f^*(s)\,ds.
\end{equation}
\end{lem}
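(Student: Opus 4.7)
The plan is to derive both identities from the weak formulation \eqref{ELE} applied to a truncation of $(u-t)_+$, and then run a weighted Cauchy--Schwarz inequality that packages together the interior level set and the Robin boundary portion. For the radial companion $v$, every inequality in the chain collapses to an equality thanks to the explicit form found in the previous subsection.

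First I would insert the test function $\varphi_h = \min\{(u-t)_+/h,\,1\}$ into \eqref{ELE}. Using the homogeneity identity $H_{\xi}(\nabla u)\cdot \nabla u = H(\nabla u)$ on the layer $\{t<u<t+h\}$ and the coarea formula, the first term converges as $h\to 0^+$ to $\int_{\partial U_t^{\text{int}}} H^2(\nabla u)/|\nabla u|\, d\mathcal{H}^{n-1}$; the Robin term converges to $\beta\int_{\partial U_t^{\text{ext}}} u\,H(\nu)\, d\mathcal{H}^{n-1}$ since $\varphi_h\to\chi_{\{u>t\}}$ a.e. on $\partial\Omega$; and the right-hand side tends to $\int_{U_t} f\, dx$, which is bounded by $\int_0^{\mu(t)} f^*(s)\, ds$ via Hardy--Littlewood. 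Hence, for a.e.\ $t>0$,
\begin{equation*}
\int_{\partial U_t^{\text{int}}} \frac{H^2(\nabla u)}{|\nabla u|}\, d\mathcal{H}^{n-1} + \beta\int_{\partial U_t^{\text{ext}}} u\,H(\nu)\, d\mathcal{H}^{n-1} \;\le\; \int_0^{\mu(t)} f^*(s)\, ds.
\end{equation*}

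Next I would bound $P_H(U_t)$ from above by splitting it across its two pieces and applying a two-term Cauchy--Schwarz with matched weights: on $\partial U_t^{\text{int}}$ write $H(\nabla u)/|\nabla u|$ as $\sqrt{H^2(\nabla u)/|\nabla u|}\cdot\sqrt{1/|\nabla u|}$, and on $\partial U_t^{\text{ext}}$ write $H(\nu)$ as $\sqrt{\beta u\,H(\nu)}\cdot\sqrt{H(\nu)/(\beta u)}$. The Cauchy--Schwarz inequality then yields
\begin{equation*}
P_H(U_t)^2 \le \lto \int_{\partial U_t^{\text{int}}} \frac{H^2(\nabla u)}{|\nabla u|}\, d\mathcal{H}^{n-1} + \beta\int_{\partial U_t^{\text{ext}}} u\,H(\nu)\, d\mathcal{H}^{n-1}\rto \lto -\mu'(t) + \frac{1}{\beta}\int_{\partial U_t^{\text{ext}}} \frac{H(\nu)}{u}\, d\mathcal{H}^{n-1}\rto,
\end{equation*}
using $-\mu'(t)=\int_{\partial U_t^{\text{int}}} 1/|\nabla u|\, d\mathcal{H}^{n-1}$ from coarea. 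Combining with the previous bound and the anisotropic isoperimetric inequality \eqref{IIA}, $P_H(U_t)\ge n k_n^{1/n}\mu(t)^{(n-1)/n}$, delivers \eqref{MU}.

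For \eqref{PHI}, the same derivation applied to $v$ gives the analogous chain with $U_t,\mu,u$ replaced by $V_t,\phi,v$. Now every step becomes an equality: Hardy--Littlewood is an equality because $f^\star$ is already the convex-symmetric rearrangement on $\Omega^\star$; the Cauchy--Schwarz step is an equality because on each interior level set $H(\nabla v)$ and $|\nabla v|$ are constant (as $v=v(H^\circ(x))$ by the previous subsection), while on $\partial V_t^{\text{ext}}\subset \partial\Omega^\star$ both $v$ and $H(\nu)$ are constant as well, so the paired integrands are proportional; finally, the isoperimetric inequality is an equality because the superlevel sets of $v$ are Wulff shapes. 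I expect the main obstacle to be the bookkeeping of the Cauchy--Schwarz splitting in the second paragraph: one must choose precisely the pairing $(\beta u\,H(\nu),\,H(\nu)/(\beta u))$ on $\partial U_t^{\text{ext}}$ in order for the Robin boundary terms produced by the weak formulation to match, term by term, the factor $\frac{1}{\beta}\int_{\partial U_t^{\text{ext}}} H(\nu)/u\, d\mathcal{H}^{n-1}$ appearing in \eqref{PHI}--\eqref{MU}. A secondary technical point is the passage to the limit $h\to 0$ on the set $\{|\nabla u|=0\}$, handled by the standard truncation arguments underpinning \eqref{AFLTAB}.
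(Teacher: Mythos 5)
Your proposal follows essentially the same route as the paper: the level-set truncation $\varphi_h$ in the weak formulation \eqref{ELE}, the limit $h\to 0^+$ via coarea to isolate the interior flux and the Robin boundary term, a weighted Cauchy--Schwarz on the two pieces of $\partial U_t$ (the paper packages this by defining auxiliary functions $g$ and $h$, but the pairing is identical), then the coarea identity $-\mu'(t)=\int_{\partial U_t^{\text{int}}}|\nabla u|^{-1}\,d\mathcal{H}^{n-1}$, Hardy--Littlewood, and the anisotropic isoperimetric inequality \eqref{IIA}. Your explanation of why the chain collapses to equality for the radial solution $v$ is more detailed than the paper (which just asserts it), and is correct in substance, but contains one imprecision: you claim $|\nabla v|$ is constant on level sets, which need not be true in the anisotropic setting since $\nabla v = v'(H^\circ(x))\,H^\circ_\xi(x)$ and $|H^\circ_\xi(x)|$ generally varies over a Wulff sphere. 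Fortunately this is not what the equality condition in Cauchy--Schwarz requires; it only requires the ratio $g=\sqrt{hg}/\sqrt{h/g}$ to be constant on $\partial V_t$, i.e.\ $H(\nabla v)$ constant on $\partial V_t^{\text{int}}$ (which holds because $H(\nabla v)=|v'|\,H(H^\circ_\xi)=|v'|$) and $\beta v$ constant on $\partial V_t^{\text{ext}}\subset\partial\Omega^\star$ (which holds by radial symmetry), so your conclusion stands.
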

\begin{proof}
	Let $t,h>0$ and let us consider the following test function in $H^1(\Omega)$
	\begin{equation*}
		\varphi_h (x) =
		\begin{cases}
		0 & u \le t \\
		u-t & t < u \le t+h \\
		h & u \ge t+h .
		\end{cases}
	\end{equation*}
Substituting this in \eqref{ELE} we have
\begin{equation*}
	\begin{split}
	&\int_{U_t \setminus U_{t+h}} H(\nabla u) H_{\xi}(\nabla u) \cdot \nabla u \,dx + \beta \int_{\partial U^{\text{ext}}_t\setminus \partial U^{\text{ext}}_{t+h}} H(\nu)(u-t)u\,d\mathcal{H}^{n-1} \\
	&+ \beta h \int_{\partial U^{\text{ext}}_{t+h}} H(\nu)u\,d\mathcal{H}^{n-1} = \int_{U_t \setminus U_{t+h}} f(u-t)dx + h \int_{U_{t+h}} f\,dx.
	\end{split}
\end{equation*}
Applying \eqref{prodsc} in the first integral, dividing by $h$ and applying Coarea Formula, we have for a.e. $t>0$
\begin{equation*}
\begin{split}
&\frac{1}{h}\int_t^{t+h}\int_{\partial U^{\text{int}}_{\tau}} \frac{H^2(\nabla u)}{|\nabla u|} \,d\mathcal{H}^{n-1} \,d\tau + \frac{\beta}{h} \int_{\partial U^{\text{ext}}_t\setminus \partial U^{\text{ext}}_{t+h}} H(\nu)(u-t)u\,d\mathcal{H}^{n-1} \\
&+ \beta \int_{\partial U^{\text{ext}}_{t+h}} H(\nu)u\,d\mathcal{H}^{n-1} = \frac{1}{h}\int_{U_t \setminus U_{t+h}} f(u-t)dx + \int_{U_{t+h}} f\,dx.
\end{split}
\end{equation*}
Passing to the limit for $h\rightarrow 0^+$ we have
\begin{equation*}
	\int_{\partial U^{\text{int}}_t} \frac{H^2(\nabla u)}{|\nabla u|} \,d\mathcal{H}^{n-1} + \beta \int_{\partial U^{\text{ext}}_t}H(\nu) u \,d\mathcal{H}^{n-1} = \int_{U_t}f\,dx.
\end{equation*}
\begin{comment}Considering that on $\partial U_t$ we have that $H(\nu) = \frac{H(\nabla u)}{|\nabla u |}$, the previous equation becomes
\begin{equation*}
	\int_{\partial U^{\text{int}}_t} H(\nu)H(\nabla u) \,d\mathcal{H}^{n-1} + \beta \int_{\partial U^{\text{ext}}_t}H(\nu) u \,d\mathcal{H}^{n-1} = \int_{U_t}f\,dx.
\end{equation*}
\end{comment}
Let us set
\begin{equation}
	g(x)=
	\begin{cases}
	H(\nabla u) \;\;\;\;\;\;\;\; \partial U^{\text{int}}_t\\
	\beta u     \;\;\;\;\;\;\;\;\;\;\;\;\;\; \partial U^{\text{ext}}_t.
	\end{cases}
\end{equation}
%we have
%\begin{equation} \label{eqg}
%	\int_{\partial U_t} H(\nu)g(x) \,d\mathcal{H}^{n-1} = \int_{U_t}f\,dx.
%\end{equation}
We want to end the proof using the anisotropic version of the isoperimetric inequality and to this aim it is necessary to write properly the anisotropic peremeter of $U_t$. Because of the regularity of $\partial \Omega$ we know that $\partial U^{\text{ext}}_t$ is sufficiently regular and a normal vector can be defined. Being $u\in H^1(\Omega)$ and $f\in L^2(\Omega)$, then $\partial U^{\text{int}}_t$ can't have any good regularity property. By \eqref{AFLTAB} we can write for a.e. $t>0$
\begin{equation*}
	P_H(U_t)= \int_{\partial U^{\text{int}}_t} \frac{H(\nabla u)}{|\nabla u|}\,d\mathcal{H}^{n-1} + \int_{\partial U^{\text{ext}}_t} H(\nu)\,d\mathcal{H}^{n-1},
\end{equation*}
where $\nu$ is the outer unit normal to $\Omega$. If we set
\begin{equation}
h(x)=
\begin{cases}
\ds \frac{H(\nabla u)}{|\nabla u|} & \partial U^{\text{int}}_t\\
H(\nu)     & \partial U^{\text{ext}}_t,
\end{cases}
\end{equation}
then
\begin{equation*}
	P_H(U_t) = \int_{\partial U_t} h(x)\,d\mathcal{H}^{n-1}.
\end{equation*}
Furthermore we note that 
\begin{equation} \label{hgf}
	\int_{\partial U_t} h(x)g(x)\,d\mathcal{H}^{n-1}= \int_{U_t}f\,dx.
\end{equation}
Therefore by Schwarz inequality and \eqref{hgf}, we have for a.e. $t>0$
\begin{equation*}
\begin{split}
	P^2_H(U_t) &= \left( \int_{\partial U_t} h(x)\,d\mathcal{H}^{n-1} \right)^2 = \left( \int_{\partial U_t} \sqrt{h(x)g(x)} \sqrt{\frac{h(x)}{g(x)}}\,d\mathcal{H}^{n-1} \right)^2\\
	& \le \int_{\partial U_t} h(x)g(x)\,d\mathcal{H}^{n-1} \left( \int_{\partial U^{\text{int}}_t} \frac{1}{|\nabla u|}\,d\mathcal{H}^{n-1} + \frac{1}{\beta}\int_{\partial U^{\text{ext}}_t}\frac{H(\nu)}{u} \,d\mathcal{H}^{n-1}\right) \\
%	&=  \left( \int_{\partial U^{\text{int}}_t} \frac{1}{|\nabla u|}\,d\mathcal{H}^{n-1} + \frac{1}{\beta}\int_{\partial U^{\text{ext}}_t}\frac{H(\nu)}{u} \,d\mathcal{H}^{n-1}\right)\int_{U_t}f\,dx \\
	&\le  \left( -\mu ' (t) + \frac{1}{\beta}\int_{\partial U^{\text{ext}}_t}\frac{H(\nu)}{u} \,d\mathcal{H}^{n-1}\right) \int_0^{\mu(t)}f^*(s)\,ds.
	\end{split}
\end{equation*}
Hence, by \eqref{IIA}
\begin{equation*}
	n^2 k_n^{\frac{2}{n}}\mu (t)^{\frac{2n-2}{n}} \le \left( -\mu ' (t) + \frac{1}{\beta}\int_{\partial U^{\text{ext}}_t}\frac{H(\nu)}{u} \,d\mathcal{H}^{n-1}\right) \int_0^{\mu(t)}f^*(s)\,ds.
\end{equation*}
If we do the same computations, replacing $v$ with $u$, all the previous inequalities become equalities and we have \eqref{PHI}.\\
In particular, if $f \equiv 1$ in $\Omega$, we have
\begin{equation} \label{MUf1}
n^2 k_n^{\frac{2}{n}} \mu (t)^{\frac{n-2}{n}} \le  -\mu ' (t) + \frac{1}{\beta}\int_{\partial U^{\text{ext}}_t}\frac{H(\nu)}{u} \,d\mathcal{H}^{n-1},
\end{equation}
and
\begin{equation} \label{PHIf1}
n^2 k_n^{\frac{2}{n}} \phi (t)^{\frac{n-2}{n}} =  -\phi ' (t) + \frac{1}{\beta}\int_{\partial U^{\text{ext}}_t}\frac{H(\nu)}{u} \,d\mathcal{H}^{n-1}.
\end{equation}
\end{proof}

\begin{rem} \label{umvm}
	Let us notice that $u_{\min} \le v_{\min}$. Indeed, being the level sets of $v$ homothetic to $\Omega^{\star}$, then, using \eqref{ATPR}, \eqref{SATPR} and the isoperimetric inequality
	\begin{equation} \label{VMMUUM}
	\begin{split}
	v_{\min}P_H(\Omega^{\star}) &= \int_{\partial \Omega^{\star}}  H(\nu)v(x) \,d\mathcal{H}^{n-1}= - \frac{1}{\beta}\int_{\partial \Omega^{\star}} H(\nabla v)H_{\xi}(\nabla v)\cdot \nu \,d\mathcal{H}^{n-1} \\
	&= \frac{|\Omega^{\star}|}{\beta} = \frac{|\Omega|}{\beta} = - \frac{1}{\beta}\int_{\partial \Omega} H(\nabla u)H_{\xi}(\nabla u)\cdot \nu \,d\mathcal{H}^{n-1} \\
	&= \int_{\partial \Omega}  H(\nu)u(x) \,d\mathcal{H}^{n-1} \ge u_{\min} P_H(\Omega) \ge u_{\min} P_H(\Omega^{\star}).
	\end{split}
	\end{equation}
As a consequence for all $0<t <v_{\min}$ we have that
\begin{equation} \label{MUMUFI}
	\mu(t)\le \phi(t)= |\Omega|.
\end{equation}
\end{rem}

\begin{lem} \label{LEM3}
	For all $t\ge v_{\min}$ we have 
	\begin{equation} \label{PHILEM3}
		\int_0^t \tau \left( \int_{\partial V_{\tau}\cap \partial \Omega^{\star}} \frac{H(\nu)}{v(x)} \,d\mathcal{H}^{n-1} \right)\,d\tau = \frac{1}{2\beta} \int_0^{|\Omega|}f^*(s)\,ds
	\end{equation}
	and
	\begin{equation} \label{MULEM3}
	\int_0^t \tau \left( \int_{\partial U^{\text{ext}}_t} \frac{H(\nu)}{u(x)} \,d\mathcal{H}^{n-1} \right)\,d\tau \le  \frac{1}{2\beta} \int_0^{|\Omega|}f^*(s)\,ds
	\end{equation}
\end{lem}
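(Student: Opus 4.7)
The plan is to prove the two assertions independently, exploiting the fact that $v$ is radial (with respect to $H^{\circ}$) while $u$ is general. The common ingredient is Fubini's theorem applied to the double integral, although for $v$ the radial structure essentially removes any $\tau$-dependence from the inner set.

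For the equality \eqref{PHILEM3}, I would use the explicit radial formula \eqref{SOLRAD}, which shows that $v$ depends only on $H^{\circ}(x)$ and is strictly decreasing in that variable; hence $v \equiv v_{\min}$ on $\partial \Omega^{\star}$ while $v > v_{\min}$ in the interior. Consequently $\partial V_\tau \cap \partial \Omega^{\star} = \partial \Omega^{\star}$ for every $\tau < v_{\min}$ and $\partial V_\tau \cap \partial \Omega^{\star} = \emptyset$ for every $\tau > v_{\min}$, so that for any $t \ge v_{\min}$ the left-hand side of \eqref{PHILEM3} collapses to
\begin{equation*}
\int_0^{v_{\min}} \tau \cdot \frac{P_H(\Omega^{\star})}{v_{\min}} \, d\tau = \frac{v_{\min}\, P_H(\Omega^{\star})}{2}.
\end{equation*}
Repeating the computation in the first three lines of \eqref{VMMUUM} without simplifying $\int_{\Omega} f$ to $|\Omega|$ (i.e.\ using the Robin condition on $\partial \Omega^{\star}$ and the divergence theorem applied to $-\dive(H(\nabla v)H_{\xi}(\nabla v)) = f^{\star}$) gives $v_{\min} P_H(\Omega^{\star}) = \frac{1}{\beta} \int_{\Omega^{\star}} f^{\star} \, dx = \frac{1}{\beta} \int_0^{|\Omega|} f^*(s)\, ds$, which closes \eqref{PHILEM3}.

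For the inequality \eqref{MULEM3}, I would swap the order of integration. Using that the $\mathcal{H}^{n-1}$-measurable set $\partial U_\tau^{\text{ext}}$ coincides (up to null sets, in the trace sense) with $\{x \in \partial \Omega : u(x) \ge \tau\}$, Fubini yields
\begin{equation*}
\int_0^t \tau \int_{\partial U_\tau^{\text{ext}}} \frac{H(\nu)}{u(x)} \, d\mathcal{H}^{n-1}(x) \, d\tau = \int_{\partial \Omega} \frac{H(\nu(x))}{u(x)} \cdot \frac{[\min(t,u(x))]^2}{2} \, d\mathcal{H}^{n-1}(x).
\end{equation*}
Bounding $[\min(t,u(x))]^2 \le u(x)^2$ turns the right-hand side into $\frac{1}{2}\int_{\partial \Omega} H(\nu) u \, d\mathcal{H}^{n-1}$, and then plugging the test function $\varphi \equiv 1$ into the weak formulation \eqref{ELE} yields $\beta \int_{\partial \Omega} H(\nu) u \, d\mathcal{H}^{n-1} = \int_{\Omega} f \, dx = \int_0^{|\Omega|} f^*(s) \, ds$, which gives \eqref{MULEM3}.

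The main technical point I expect to have to justify is the Fubini step for $u$: since $u \in H^1(\Omega)$ is only defined up to a Lebesgue-null set in $\Omega$ and up to an $\mathcal{H}^{n-1}$-null set on $\partial \Omega$, one must argue that the superlevel-trace set $\{x \in \partial \Omega : u(x) \ge \tau\}$ agrees with $\partial U_\tau^{\text{ext}}$ for a.e.\ $\tau$, and that the integrand $H(\nu)/u$ is locally integrable on $\{u > 0\}$ (which is automatic once one has the identity $\int_{\partial \Omega} H(\nu) u \, d\mathcal{H}^{n-1} = \frac{1}{\beta}\int_{\Omega} f < \infty$). Observe that the restriction $t \ge v_{\min}$ is used only to saturate the equality for $v$: the inequality for $u$ holds for every $t \ge 0$.
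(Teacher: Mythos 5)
Your proof is correct and follows essentially the same route as the paper's: for both assertions the key steps are Fubini's theorem on the double integral, the identity $\beta\int_{\partial\Omega}H(\nu)u\,d\mathcal{H}^{n-1}=\int_\Omega f\,dx$ (equivalently $\varphi\equiv 1$ in the weak formulation), and the observation that for $t\ge v_{\min}$ the set $\partial V_\tau\cap\partial\Omega^{\star}$ contributes nothing for $\tau>v_{\min}$. The only cosmetic difference is that the paper evaluates the untruncated integral $\int_0^\infty$ first and then uses monotonicity of $\int_0^t$ in $t$, whereas you truncate at $t$ inside the Fubini step and bound $\min(t,u)^2\le u^2$ at the end; the two are equivalent.
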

\begin{proof}
	By Fubini's Theorem and using \eqref{ATPR}, we have that
	\begin{equation*}
		\begin{split}
		\int_0^{\infty} \tau \left( \int_{\partial U^{\text{ext}}_t} \frac{H(\nu)}{u(x)} \,d\mathcal{H}^{n-1} \right)\,d\tau &= \int_{\partial \Omega} \left( \int_0^{u(x)} \frac{H(\nu)}{u(x)}\tau \,d\tau \right) \,d\mathcal{H}^{n-1} \\
		&= \int_{\partial \Omega} \frac{H(\nu)u(x)}{2}\,d\mathcal{H}^{n-1} = \frac{1}{2\beta} \int_0^{|\Omega|}f^*(s)\,ds.
		\end{split}
	\end{equation*}
	Analogously,
	\begin{equation*}
		\int_0^{\infty} \tau \left( \int_{\partial V_{\tau}\cap \partial \Omega^{\star}} \frac{H(\nu)}{v(x)} \,d\mathcal{H}^{n-1} \right)\,d\tau = \frac{1}{2\beta} \int_0^{|\Omega|}f^*(s)\,ds.
	\end{equation*}
	
By monotonicity of the integral we have that for $t\ge 0$
	\begin{equation*}
		\int_0^t \tau \left( \int_{\partial U^{\text{ext}}_t} \frac{H(\nu)}{u(x)} \,d\mathcal{H}^{n-1} \right)\,d\tau \le \int_0^{\infty} \tau \left( \int_{\partial U^{\text{ext}}_t} \frac{H(\nu)}{u(x)} \,d\mathcal{H}^{n-1} \right)\,d\tau
	\end{equation*}
and if $t\ge v_{\min}$, then $\partial V_t \cap \partial \Omega^{\star} = \emptyset $. Hence
\begin{equation*}
	\int_0^t \tau \left( \int_{\partial V_{\tau}\cap \partial \Omega^{\star}} \frac{H(\nu)}{v(x)} \,d\mathcal{H}^{n-1} \right)\,d\tau = \int_0^{\infty} \tau \left( \int_{\partial V_{\tau}\cap \partial \Omega^{\star}} \frac{H(\nu)}{v(x)} \,d\mathcal{H}^{n-1} \right)\,d\tau,
\end{equation*}
and we have \eqref{MULEM3}, \eqref{PHILEM3}.\\
If $f\equiv 1$ in $\Omega$, then
\begin{equation} \label{MULEM3f1}
\int_0^t \tau \left( \int_{\partial U^{\text{ext}}_t} \frac{H(\nu)}{u(x)} \,d\mathcal{H}^{n-1} \right)\,d\tau \le  \frac{|\Omega|}{2\beta}
\end{equation}
and
\begin{equation} \label{PHILEM3f1}
\int_0^t \tau \left( \int_{\partial V_{\tau}\cap \partial \Omega^{\star}} \frac{H(\nu)}{v(x)} \,d\mathcal{H}^{n-1} \right)\,d\tau = \frac{|\Omega|}{2\beta}.
\end{equation}

\end{proof}

%\begin{thm}
%	Let be $n\ge 3$. If $u$ and $v$ are solutions to problems \eqref{ATPR} and \eqref{SATPR} respectivley, then 
%	\begin{equation} \label{P1f}
%	\|u\|_{L^{p,1}(\Omega)} \le \|v\|_{L^{p,1}(\Omega^{\star})} \;\;\;\;\;\;\; \text{for all} \;\;\;\;\; 0 < p \le \frac{n}{2n-2}
%	\end{equation}
%	and
%	\begin{equation} \label{2P2f}
%	\|u\|_{L^{2p,2}(\Omega)} \le \|v\|_{L^{2p,2}(\Omega^{\star})} \;\;\;\;\;\;\; \text{for all} \;\;\;\;\; 0 < p \le \frac{n}{3n-4}.
%	\end{equation}
%\end{thm}
\begin{proof}[Proof of Theorem \ref{FNM2}]
	Let $0< p \le \frac{n}{2n-2}$ and let us denote $K_n = n^2 k_n^{\frac{2}{n}}$. Let us multiply \eqref{MU} by $t\mu(t)^{\eta}$, where $\eta = \frac{1}{p}- \frac{2n-2}{n} \ge 0$, and integrate from $0$ to $\tau \ge v_{\min}$
	\begin{equation*}
	\begin{split}
	\int_0^{\tau} K_n t \mu(t)^{\frac{1}{p}}\,dt &\le \int_0^{\tau} -\mu '(t) t \mu(t)^{\eta}\bigg(\int_0^{\mu(t)}f^*(s)\,ds\bigg) \,dt \,\,\, + \\
	 & \frac{1}{\beta} \int_0^{\tau}t\mu(t)^{\eta}\bigg(\int_{\partial U_t^{\text{ext}}} \frac{H(\nu)}{u(x)} \,d \mathcal{H}^{n-1} \int_0^{\mu(t)}f^*(s)\,ds\bigg)\,dt \\
	&\le \int_0^{\tau} - t \mu(t)^{\eta}\bigg(\int_0^{\mu(t)}f^*(s)\,ds\bigg) \,d\mu(t) + \frac{|\Omega|^{\eta}}{2 \beta^2} \bigg( \int_0^{|\Omega|}f^*(s)\,ds \bigg)^2 , \\
	%\int_0^{\tau}t\int_{\partial U_t^{\text{ext}}} \frac{H(\nu)}{u(x)} \,d \mathcal{H}^{n-1} \\
	% &\le \int_0^{\tau} -\mu '(t) t \mu(t)^{\eta}\,dt +\frac{|\Omega|^{\eta +1}}{2\beta^2} \le \int_0^{\tau} -t \mu(t)^{\eta}\,d\mu(t) +\frac{|\Omega|^{\eta +1}}{2\beta^2}
	\end{split}
	\end{equation*}
	where we applied Lemma \ref{LEM3} and the fact that $\mu(t)$ is a monotone non increasing function.\\
	By setting $ \ds F(l)= \int_0^l w^\eta \int_0^w f^*(s)\,ds \,dw$ and integrating by parts the first and last members in this chain of inequalities we have
	\begin{equation*}
	\begin{split}
	\tau F(\mu(\tau))+ \tau \int_0^{\tau} K_n  \mu(t)^{\frac{1}{p}}\,dt \le \int_0^{\tau} &F(\mu(t))\,dt + \int_0^{\tau}\int_0^{t} K_n  \mu(t)^{\frac{1}{p}}\,dr \,dt  \\
	&+ \frac{|\Omega|^{\eta}}{2 \beta^2} \bigg( \int_0^{|\Omega|}f^*(s)\,ds \bigg)^2
	\end{split}
	\end{equation*}
	By applying Lemma \ref{LEM1}, with
	\begin{equation*}
	\xi (\tau) = \int_0^{\tau} F(\mu(t))\,dt + \int_0^{\tau}\int_0^{t} K_n \mu(t)^{\frac{1}{p}}\,dr \,dt,
	\end{equation*}
	$ \ds C= \frac{|\Omega|^{\eta}}{2\beta^2} \bigg( \int_0^{|\Omega|}f^*(s)\,ds \bigg)^2$ and $\tau_0 = v_{\min}$, we have that
	\begin{equation} \label{FMU}
	\begin{split}
	F(\mu(\tau))&+ \int_0^{\tau} K_n  \mu(t)^{\frac{1}{p}}\,dt \le \frac{1}{v_{\min}} \bigg( \int_0^{v_{\min}} F(\mu(t))\,dt \\
	&+ \int_0^{v_{\min}}\int_0^t K_n  \mu(r)^{\frac{1}{p}}\,dr \,dt + \frac{|\Omega|^{\eta}}{2\beta^2} \bigg( \int_0^{|\Omega|}f^*(s)\,ds \bigg)^2 \bigg).
	\end{split}
	\end{equation}
	Analogously
	\begin{equation} \label{FPHI}
	\begin{split}
	F(\phi(\tau))&+ \int_0^{\tau} K_n  \phi(t)^{\frac{1}{p}}\,dt = \frac{1}{v_{\min}} \bigg( \int_0^{v_{\min}} F(\phi(t))\,dt \\
	&+ \int_0^{v_{\min}}\int_0^t K_n  \phi(r)^{\frac{1}{p}}\,dr \,dt + \frac{|\Omega|^{\eta}}{2\beta^2}\bigg( \int_0^{|\Omega|}f^*(s)\,ds \bigg)^2 \bigg).
	\end{split}
	\end{equation}
	By \eqref{VMMUUM} and \eqref{MUMUFI}, then we can compare directly the righthand sides of \eqref{FMU} and \eqref{FPHI}. So
	\begin{equation*}
	F(\mu(\tau))+ \int_0^{\tau} K_n  \mu(t)^{\frac{1}{p}}\,dt \le F(\phi(\tau))+ \int_0^{\tau} K_n  \phi(t)^{\frac{1}{p}}\,dt
	\end{equation*}
	For $\tau \rightarrow +\infty$ we have
	\begin{equation*}
	\int_0^{\infty}  \mu(t)^{\frac{1}{p}}\,dt \le \int_0^{\infty}  \phi(t)^{\frac{1}{p}}\,dt,
	\end{equation*}
	which is \eqref{P1f}.\\
	Now we want to prove \eqref{2P2f}. In order to obtain this result let us pass to the limit as $\tau \rightarrow \infty$ the following inequality:
	\begin{equation*}
	\int_0^{\tau} K_n t \mu(t)^{\frac{1}{p}}\,dt \le \int_0^{\tau} -t \mu(t)^{\eta} \bigg( \int_0^{|\Omega|}f^*(s)\,ds \bigg)\,d\mu(t) +\frac{|\Omega|^{\eta}}{2\beta^2}\bigg( \int_0^{|\Omega|}f^*(s)\,ds \bigg)^2 .
	\end{equation*}
	After an integration by parts we get
	\begin{equation*}
	\int_0^{\infty} K_n t \mu(t)^{\frac{1}{p}}\,dt \le \int_0^{\infty} F(\mu(t))\,dt +\frac{|\Omega|^{\eta}}{2\beta^2}\bigg( \int_0^{|\Omega|}f^*(s)\,ds \bigg)^2.
	\end{equation*}
	On the other hand
	\begin{equation*}
	\int_0^{\infty} K_n t \phi(t)^{\frac{1}{p}}\,dt = \int_0^{\infty} F(\phi(t))\,dt +\frac{|\Omega|^{\eta}}{2\beta^2}\bigg( \int_0^{|\Omega|}f^*(s)\,ds \bigg)^2.
	\end{equation*}
	So we need just to show that
	\begin{equation} \label{GMUGPHI}
	\int_0^{\infty} F(\mu(t))\,dt \le \int_0^{\infty} F(\phi(t))\,dt.
	\end{equation}
	To this aim we multiply \eqref{MU} by $tF(\mu(t))\mu(t)^{-\frac{2n-2}{2}}$. Since $F(l)l^{-\frac{2n-2}{2}}$ is a non decreasing function in $l$, when $ 0 < p \le \frac{n}{3n-4}$, we can integrate from $0$ to $\tau \ge v_{\min}$, to obtain
	\begin{equation*}
	\begin{split}
	\int_0^{\tau} &K_n t F(\mu(t))\,dt \le \int_0^{\tau}-t\mu(t)^{-\frac{2n-2}{n}}F(\mu(t)) \bigg( \int_0^{\mu(t)}f^*(s)\,ds \bigg)\,d\mu(t) \\
	&+ \frac{1}{\beta} \int_0^{\tau} tF(\mu(t))\mu(t)^{-\frac{2n-2}{n}}\int_{\partial U_t^{\text{ext}}} \frac{H(\nu)}{u}\,d\mathcal{H}^{n-1} \bigg( \int_0^{\mu(t)}f^*(s)\,ds \bigg)\,dt \\
	&\le \int_0^{\tau}-t\mu(t)^{-\frac{2n-2}{n}}F(\mu(t)) \bigg( \int_0^{\mu(t)}f^*(s)\,ds \bigg)\,d\mu(t)\\
	& + \frac{1}{\beta} F(|\Omega|)|\Omega|^{-\frac{2n-2}{n}}  \bigg( \int_0^{|\Omega|}f^*(s)\,ds \bigg)\int_0^{\tau} t\int_{\partial U_t^{\text{ext}}} \frac{H(\nu)}{u}\,d\mathcal{H}^{n-1} \,dt\\
	&\le \int_0^{\tau}-t\mu(t)^{-\frac{2n-2}{n}}F(\mu(t))\bigg( \int_0^{\mu(t)}f^*(s)\,ds \bigg)\,d\mu(t)\\
	& + F(|\Omega|)\frac{|\Omega|^{\frac{2n-2}{n}}}{2\beta^2}\bigg( \int_0^{|\Omega|}f^*(s)\,ds \bigg)^2,
	\end{split}
	\end{equation*}
	where, again, we applied \ref{LEM3}. Now, if we call $\ds C=F(|\Omega|)\frac{|\Omega|^{\frac{2n-2}{n}}}{2\beta^2}\bigg( \int_0^{|\Omega|}f^*(s)\,ds \bigg)^2$ and set $\ds J(l)= \int_0^l w^{-\frac{2n-2}{n}}F(w)\bigg( \int_0^{w}f^*(s)\,ds \bigg)\,dw$, integrating by parts the first and last member of the previous chain of inequalities, we have
	\begin{equation*}
	\tau \int_0^{\tau}K_n F(\mu(t))\,dt + \tau J(\mu(\tau)) \le \int_0^{\tau} \int_0^r K_n F(\mu(z))\,dz \,dr + \int_0^{\tau} J(\mu(t))\,dt + C.
	\end{equation*}
	Setting
	\begin{equation*}
	\xi (\tau) = \int_0^{\tau} \int_0^r K_n F(\mu(z))\,dz \,dr + \int_0^{\tau} J(\mu(t))\,dt,
	\end{equation*}
	and applying \ref{LEM1} with $\tau_0 = v_{\min}$ we deduce that
	\begin{equation*}
	\begin{split}
	\int_0^{\tau} K_n F(\mu(t))\,dt + J(\mu(\tau)) \le \frac{1}{v_{\min}} \bigg( \int_0^{v_{\min}}&\int_0^r K_n F(\mu(z))\,dz \,dr \\
	&+ \int_0^{v_{\min}} J(\mu(t))\,dt + C \bigg).
	\end{split}
	\end{equation*}
	This inequality holds as an equality when we have $\phi$ in place of $\mu$, so as before
	\begin{equation*}
	\int_0^{\tau} K_n F(\mu(t))\,dt + J(\mu(t)) \le \int_0^{\tau} K_n F(\phi(t))\,dt + J(\phi(t)).
	\end{equation*}
	For $\tau \rightarrow \infty$ we have \eqref{2P2f}, which concludes the proof.
\end{proof}

%\begin{thm} \label{N2}
%	Let $n=2$, $f\equiv 1$ in $\Omega$, $u$ and $v$ solutions %to problems \eqref{ATPR} and \eqref{SATPR} respectivley. Then
%	\begin{equation}
%		u^{\star}(x) \le v(x) \,\,\,\,\, x \in \Omega^{\star}.
%	\end{equation}
%\end{thm}
\begin{proof}[Proof of Theorem \ref{N2}]
	Multiplying by $t\ge 0$  inequality \eqref{MUf1} and integrating from $0$ to $\tau \ge v_{\min}$, we have that
	\begin{equation*}
		2k_2 \tau^2 \le \int_0^{\tau} -\mu '(t)t \,dt + \frac{|\Omega|}{2\beta^2}.
	\end{equation*}
Here we applied Lemma \ref{LEM3}. Analogously for \eqref{PHIf1}
\begin{equation*}
	2k_2 \tau^2 = \int_0^{\tau} -\phi '(t)t \,dt + \frac{|\Omega|}{2\beta^2}.
\end{equation*}
Then
\begin{equation*}
	\int_0^{\tau} t(-d\mu(t)) \ge \int_0^{\tau} t(-d\phi(t)),
\end{equation*}
for every $\tau\ge v_{\min}$. Integrating by parts
\begin{equation} \label{INTAU}
	\mu(\tau) \le \phi(\tau) \,\,\,\,\,\, \tau\ge v_{\min}.
\end{equation}
Since $u_{\min} \le v_{\min}$, inequality \eqref{INTAU} holds for $t\ge 0$ and the claim is proved.
\end{proof}

%\begin{thm}
%	Let be $n\ge 3$. If $u$ and $v$ are solutions to problems \eqref{ATPR} and \eqref{SATPR} respectivley, then 
%	\begin{equation} \label{P1}
%		\|u\|_{L^{p,1}(\Omega)} \le \|v\|_{L^{p,1}(\Omega^{\star})}
%	\end{equation}
%and
%\begin{equation} \label{2P2}
%\|u\|_{L^{2p,2}(\Omega)} \le \|v\|_{L^{2p,2}(\Omega^{\star})},
%\end{equation}
%for all $0 < p \le \frac{n}{n-2}$.
	
%\end{thm}

\begin{proof}[Proof of Theorem \ref{F1NM3}]
Let $0<p \le \frac{n}{n-2}$. Let us multiply \eqref{MUf1} by $t\mu(t)^{\eta}$, where $\eta = \frac{1}{p}- \frac{n-2}{n} \ge 0$, and integrate from $0$ to $\tau \ge v_{\min}$
\begin{equation*}
\begin{split}
	\int_0^{\tau} K_n t \mu(t)^{\frac{1}{p}}\,dt &\le \int_0^{\tau} -\mu '(t) t \mu(t)^{\eta}\,dt + \frac{1}{\beta} \int_0^{\tau}t\mu(t)^{\eta}\int_{\partial U_t^{\text{ext}}} \frac{H(\nu)}{u(x)} \,d \mathcal{H}^{n-1}\\
	&\le \int_0^{\tau} -\mu '(t) t \mu(t)^{\eta}\,dt + \frac{|\Omega|^{\eta}}{\beta} \int_0^{\tau}t\int_{\partial U_t^{\text{ext}}} \frac{H(\nu)}{u(x)} \,d \mathcal{H}^{n-1} \\
	&\le \int_0^{\tau} -\mu '(t) t \mu(t)^{\eta}\,dt +\frac{|\Omega|^{\eta +1}}{2\beta^2} \le \int_0^{\tau} -t \mu(t)^{\eta}\,d\mu(t) +\frac{|\Omega|^{\eta +1}}{2\beta^2}
\end{split}
\end{equation*}
where, again, $K_n = n^2 k_n^{\frac{2}{n}}$, in the third inequality we applied Lemma \ref{LEM3}, and in the last the fact that $\mu(t)$ is a monotone non increasing function.\\
By setting $ \ds G(l)= \int_0^l w^\eta = \frac{l^{\eta + 1}}{\eta + 1}$ and integrating by parts the first and last members in this chain of inequalities we have
\begin{equation*}
	\tau G(\mu(\tau))+ \tau \int_0^{\tau} K_n t \mu(t)^{\frac{1}{p}}\,dt \le \int_0^{\tau} G(\mu(t))\,dt + \int_0^{\tau}\int_0^{t} K_n  \mu(t)^{\frac{1}{p}}\,dr \,dt + \frac{|\Omega|^{\eta +1}}{2\beta^2}
\end{equation*}
By applying Lemma \ref{LEM1}, with
\begin{equation*}
	\xi (t) = \int_0^{\tau} G(\mu(t))\,dt + \int_0^{\tau}\int_0^{t} K_n \mu(t)^{\frac{1}{p}}\,dr \,dt,
\end{equation*}
$\ds C= \frac{|\Omega|^{\eta +1}}{2\beta^2}$ and $\tau_0 = v_{\min}$, we have that
\begin{equation} \label{GMU}
	\begin{split}
	 G(\mu(\tau))&+ \int_0^{\tau} K_n  \mu(t)^{\frac{1}{p}}\,dt \le \frac{1}{v_{\min}} \bigg( \int_0^{v_{\min}} G(\mu(t))\,dt \\
	 &+ \int_0^{v_{\min}}\int_0^t K_n  \mu(r)^{\frac{1}{p}}\,dr \,dt + \frac{|\Omega|^{\eta +1}}{2\beta^2} \bigg).
	\end{split}
\end{equation}
Analogously
\begin{equation} \label{GPHI}
\begin{split}
G(\phi(\tau))&+ \int_0^{\tau} K_n  \phi(t)^{\frac{1}{p}}\,dt = \frac{1}{v_{\min}} \bigg( \int_0^{v_{\min}} G(\phi(t))\,dt \\
&+ \int_0^{v_{\min}}\int_0^t K_n  \phi(r)^{\frac{1}{p}}\,dr \,dt + \frac{|\Omega|^{\eta +1}}{2\beta^2} \bigg).
\end{split}
\end{equation}
By \eqref{VMMUUM} and \eqref{MUMUFI}, we compare directly the righthand sides of \eqref{GMU} and \eqref{GPHI}. So
\begin{equation*}
	G(\mu(\tau))+ \int_0^{\tau} K_n  \mu(t)^{\frac{1}{p}}\,dt \le G(\phi(\tau))+ \int_0^{\tau} K_n  \phi(t)^{\frac{1}{p}}\,dt
\end{equation*}
For $\tau \rightarrow +\infty$ we have
\begin{equation*}
	\int_0^{\infty}  \mu(t)^{\frac{1}{p}}\,dt \le \int_0^{\infty}  \phi(t)^{\frac{1}{p}}\,dt,
\end{equation*}
which is \eqref{P1}.
Now we want to prove \eqref{2P2}. In order to obtain this result let us pass to the limit as $\tau \rightarrow \infty$ the following inequality:
\begin{equation*}
	\int_0^{\tau} K_n t \mu(t)^{\frac{1}{p}}\,dt \le \int_0^{\tau} -t \mu(t)^{\eta}\,d\mu(t) +\frac{|\Omega|^{\eta +1}}{2\beta^2}.
\end{equation*}
After an integration by parts we get
\begin{equation*}
	\int_0^{\infty} K_n t \mu(t)^{\frac{1}{p}}\,dt \le \int_0^{\infty} G(\mu(t))\,dt +\frac{|\Omega|^{\eta +1}}{2\beta^2}.
\end{equation*}
On the other hand
\begin{equation*}
	\int_0^{\infty} K_n t \phi(t)^{\frac{1}{p}}\,dt = \int_0^{\infty} G(\phi(t))\,dt +\frac{|\Omega|^{\eta +1}}{2\beta^2}.
\end{equation*}
So we need just to show that
\begin{equation} \label{GMUGPHI}
	\int_0^{\infty} G(\mu(t))\,dt \le \int_0^{\infty} G(\phi(t))\,dt.
\end{equation}
To this aim we multiply \eqref{MUf1} by $tG(\mu(t))\mu(t)^{-\frac{n-2}{n}}$. Since $G(l)l^{-\frac{n-2}{n}}$ is a non decreasing function in $l$, we can integrate from $0$ to $\tau \ge v_{\min}$, to obtain
\begin{equation*}
\begin{split}
\int_0^{\tau} &K_n t G(\mu(t))\,dt \le \int_0^{\tau}-t\mu(t)^{-\frac{n-2}{n}}G(\mu(t))\,d\mu(t) \\
&+ \frac{1}{\beta} \int_0^{\tau} tG(\mu(t))\mu(t)^{-\frac{n-2}{n}}\int_{\partial U_t^{\text{ext}}} \frac{H(\nu)}{u}\,d\mathcal{H}^{n-1}\,dt \\
&\le \int_0^{\tau}-t\mu(t)^{-\frac{n-2}{n}}G(\mu(t))\,d\mu(t) + \frac{1}{\beta} G(|\Omega|)|\Omega|^{-\frac{n-2}{n}} \int_0^{\tau} t\int_{\partial U_t^{\text{ext}}} \frac{H(\nu)}{u}\,d\mathcal{H}^{n-1} \,dt\\
&\le \int_0^{\tau}-t\mu(t)^{-\frac{n-2}{n}}G(\mu(t))\,d\mu(t) + G(|\Omega|)\frac{|\Omega|^{\frac{2}{n}}}{2\beta^2},
\end{split}
\end{equation*}
where, again, we applied \ref{LEM3}. Now, if we call $\ds C=G(|\Omega|)\frac{|\Omega|^{\frac{2}{n}}}{2\beta^2}$ and set $\ds J(l)= \int_0^l w^{-\frac{n-2}{n}}G(w)\,dw$, integrating by parts the first and last member of the previous chain of inequalities, we have
\begin{equation*}
	\tau \int_0^{\tau}K_n G(\mu(t))\,dt + \tau J(\mu(\tau)) \le \int_0^{\tau} \int_0^r K_n G(\mu(z))\,dz \,dr + \int_0^{\tau} J(\mu(t))\,dt + C.
\end{equation*}
Setting
\begin{equation*}
	\xi (t) = \int_0^{\tau} \int_0^r K_n G(\mu(z))\,dz \,dr + \int_0^{\tau} J(\mu(t))\,dt,
\end{equation*}
and applying \ref{LEM1} with $\tau_0 = v_{\min}$ we deduce that
\begin{equation*}
\begin{split}
\int_0^{\tau} K_n G(\mu(t))\,dt + J(\mu(\tau)) \le \frac{1}{v_{\min}} \bigg( \int_0^{v_{\min}}&\int_0^r K_n G(\mu(z))\,dz \,dr \\
&+ \int_0^{v_{\min}} J(\mu(t))\,dt, + C \bigg).
\end{split}
\end{equation*}
This inequality holds as an equality when we have $\phi$ in place of $\mu$, so as before
\begin{equation*}
	\int_0^{\tau} K_n G(\mu(t))\,dt + J(\mu(t)) \le \int_0^{\tau} K_n G(\phi(t))\,dt + J(\phi(t)).
\end{equation*}
For $\tau \rightarrow \infty$ we have \eqref{GMUGPHI}, which concludes the proof.
\end{proof}

\section{Application to PDE's: Bossel-Daners inequality} 
Let $\Omega$ be a bounded and smooth open set in $\mathbb{R}^n$. Let us denote by $\nu$ the outer unit normal to $\partial \Omega$ and let $\beta>0$ be a positive real number. It is well known that for the following Laplacian eigenvalue problem with Robin boundary conditions
\begin{equation*}
	\begin{cases}
	-\Delta u = \lambda(\Omega)u & \text{in} \,\, \Omega \\
	\ds \frac{\partial u}{\partial \nu} + \beta u = 0 & \text{on}\,\, \partial \Omega,
	\end{cases}
\end{equation*}
a Faber-Krahn type inequality for the first eigenvalue holds. It is famous under the name of Bossel-Daners inequality and it can be read as follows
\begin{equation*}
	\lambda_{1,\beta}(\Omega) \ge \lambda_{1,\beta} (\Omega^{\sharp}),
\end{equation*}
where $\Omega^{\sharp}$ is the ball centered at the origin with the same measure as $\Omega$. Equality holds if and only if $\Omega$ is a ball.\\
Let us consider now the anisotropic case. If $f=\lambda(\Omega) u$, then \eqref{ATPR} can be written in this way
\begin{equation} \label{ATPREP}
	\begin{cases}
	-\mathrm{div} (H(\nabla u)H_{\xi}(\nabla u))=\lambda(\Omega) u & \mbox{in}\ \Omega\vspace{0.2cm}\\
	H(\nabla u)H_{\xi}(\nabla u)\cdot \nu +\beta H(\nu) u=0&\mbox{on}\ \partial \Omega\vspace{0.2cm}.
	\end{cases}
\end{equation}
The variational characterization for the first eiganvalue is
\begin{equation}
	\lambda_1(\Omega) = \min_{w \in H^1(\Omega)\setminus\{0\}} J[u],
\end{equation}
where
\begin{equation}
	\ds J[u]= \frac{\ds \int_{\Omega} H^2(\nabla u)\,dx + \beta \int_{\partial \Omega}u^2 H(\nu) \,d\mathcal{H}^{n-1}}{\ds \int_{\Omega} u^2\,dx}.
\end{equation}
In \cite{DG} the authors proved a Bossel-Daners type inequality for the anisotropic $p$-Laplacian problem. Indeed, they proved that
\begin{equation} \label{ABDI}
	\lambda_{1,\beta} (\Omega) \ge \lambda_{1,\beta} (\Omega^{\star}),
\end{equation}
where $\Omega^{\star}$ is the set homothetic to the Wulff Shape having the same measure as $\Omega$. In particular the equality case holds if and only if $\Omega$ is a set homothetic to the Wulff Shape.\\
In this section we want to give an alternative proof of \eqref{ABDI} in the planar case, using the results found in the previous section.
\begin{cor}
Let $n=2$ and $u$ be the first eigenfunction associated to $\lambda_{1,\beta}(\Omega)$, solution to problem \eqref{ATPREP}. If $z$ is the solution to the symmetrized problem
\begin{equation}
	\begin{cases}
	-\mathrm{div} (H(\nabla z)H_{\xi}(\nabla z))=\lambda_{1,\beta}(\Omega) u^{\star} & \mbox{in}\ \Omega^{\star}\vspace{0.2cm}\\
	H(\nabla z)H_{\xi}(\nabla z)\cdot \nu +\beta H(\nu) z=0&\mbox{on}\ \partial \Omega^{\star}\vspace{0.2cm}.
	\end{cases}
\end{equation}
then we have that
\begin{equation}
	\lambda_{1,\beta} (\Omega) \ge \lambda_{1,\beta} (\Omega^{\star}).
\end{equation}
Equality occurs if and only if $\Omega$ is homothetic to the Wulff Shape with same measure as $\Omega$.
\end{cor}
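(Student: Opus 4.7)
The plan is to use $z$ itself as a competitor in the Rayleigh quotient on $\Omega^{\star}$ and to reduce the resulting inequality to the $L^{2}$ estimate \eqref{2P2f} of Theorem \ref{FNM2}. Since $u$ is a positive first eigenfunction, it is in particular a weak solution to \eqref{ATPR} with datum $f = \lambda_{1,\beta}(\Omega)\,u \in L^{2}(\Omega)$, $f \ge 0$, and the identity $(\lambda_{1,\beta}(\Omega) u)^{\star} = \lambda_{1,\beta}(\Omega) u^{\star}$ shows that the $z$ of the statement is exactly the symmetrized solution $v$ attached to this $f$. This matching is what lets the previous section's results apply to the pair $(u,z)$.

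Testing the weak formulation for $z$ against $z$ itself gives
\[
\int_{\Omega^{\star}} H^{2}(\nabla z)\,dx + \beta \int_{\partial \Omega^{\star}} z^{2}\,H(\nu)\,d\mathcal{H}^{n-1} = \lambda_{1,\beta}(\Omega) \int_{\Omega^{\star}} z\,u^{\star}\,dx,
\]
so feeding $z$ into the variational characterization of $\lambda_{1,\beta}(\Omega^{\star})$ yields
\[
\lambda_{1,\beta}(\Omega^{\star}) \le J[z] = \lambda_{1,\beta}(\Omega)\,\frac{\int_{\Omega^{\star}} z\,u^{\star}\,dx}{\int_{\Omega^{\star}} z^{2}\,dx}.
\]
The proof therefore reduces to establishing $\int_{\Omega^{\star}} z\,u^{\star}\,dx \le \int_{\Omega^{\star}} z^{2}\,dx$.

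To obtain that last estimate I would combine Cauchy--Schwarz with the $L^{2}$ comparison in Theorem \ref{FNM2}. In dimension two one has $n/(3n-4) = 1$, so \eqref{2P2f} at $p=1$ reads $\|u\|_{L^{2}(\Omega)} \le \|z\|_{L^{2}(\Omega^{\star})}$ (the Lorentz space $L^{2,2}$ coincides with $L^{2}$). Coupling this with equimeasurability $\|u^{\star}\|_{L^{2}(\Omega^{\star})} = \|u\|_{L^{2}(\Omega)}$ and Cauchy--Schwarz gives
\[
\int_{\Omega^{\star}} z\,u^{\star}\,dx \le \|z\|_{L^{2}(\Omega^{\star})}\,\|u^{\star}\|_{L^{2}(\Omega^{\star})} \le \|z\|_{L^{2}(\Omega^{\star})}^{2},
\]
and inserting this into the previous display yields $\lambda_{1,\beta}(\Omega^{\star}) \le \lambda_{1,\beta}(\Omega)$.

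For the equality case, equality throughout the chain forces equality simultaneously in Cauchy--Schwarz (so $u^{\star}$ is proportional to $z$ on $\Omega^{\star}$, in fact $u^{\star} = z$ by the equality of $L^{2}$ norms) and in \eqref{2P2f}. I expect the main obstacle of the plan to be exactly this rigidity step: in the proof of Theorem \ref{FNM2} the estimate \eqref{2P2f} comes from the anisotropic isoperimetric inequality \eqref{IIA} applied to almost every superlevel set of $u$, and one needs to invoke the sharp form of \eqref{IIA} to conclude that these level sets must be homothetic Wulff shapes and, together with the Robin condition on $\partial\Omega$, that $\Omega$ itself is homothetic to $\mathcal{W}$. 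Apart from this rigidity analysis, every step of the plan is an almost direct consequence of Theorem \ref{FNM2} and the variational characterization of $\lambda_{1,\beta}$.
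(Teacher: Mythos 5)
Your argument follows essentially the same route as the paper: test $z$ in the Rayleigh quotient on $\Omega^{\star}$, use the weak formulation of the symmetrized problem to rewrite the numerator, and reduce everything to showing $\int_{\Omega^{\star}} u^{\star} z\,dx \le \int_{\Omega^{\star}} z^2\,dx$, which you obtain from Cauchy--Schwarz together with the $L^2$ comparison $\|u\|_{L^2(\Omega)} \le \|z\|_{L^2(\Omega^{\star})}$ and equimeasurability.

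One small but worthwhile difference: the paper derives the $L^2$ comparison by citing Theorem~\ref{N2}, but that theorem is stated under the hypothesis $f\equiv 1$, which is \emph{not} satisfied here (the datum is $f=\lambda_{1,\beta}(\Omega)\,u$, a genuine eigenfunction, not a constant). Your reduction via Theorem~\ref{FNM2}, inequality \eqref{2P2f} with $n=2$ and $p=1$ (so that $L^{2p,2}=L^2$), is the one that actually applies, since that theorem holds for any non-negative $L^2$ datum. So on this point your proposal is the more careful of the two.

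On the equality case: you are right to flag that it requires a separate rigidity analysis tracking equality back through the isoperimetric inequality in the proof of the comparison theorem. The paper's proof of the corollary does not carry this out either, so this is a shared gap rather than a defect specific to your proposal; but a complete proof of the ``only if'' direction would need the sharp form of \eqref{IIA} and an argument that $u^{\star}=z$ forces the level sets of $u$ to be Wulff shapes and $\Omega$ itself to be one.
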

\begin{proof} By theorem \ref{N2}, we know that
	\begin{equation*}
		\int_{\Omega} u^2\,dx = \int_{\Omega^{\star}} (u^{\star})^2 \,dx \le \int_{\Omega^{\star}} z^2\,dx.
	\end{equation*}
So, by Cauchy-Schwarz inequality we have
\begin{equation*}
	\int_{\Omega^{\star}} u^{\star}z\,dx \le \int_{\Omega^{\star}} z^2\,dx,
\end{equation*}
and this ends the corollary, indeed
\begin{equation*}
\begin{split}
	\lambda_{1,\beta}(\Omega) &= \frac{\ds \int_{\Omega} H^2(\nabla z)\,dx + \beta \int_{\partial \Omega}z^2 H(\nu) \,d\mathcal{H}^{n-1}}{\ds \int_{\Omega} u^{\star}z\,dx}\\
	& \ge \frac{\ds \int_{\Omega} H^2(\nabla z)\,dx + \beta \int_{\partial \Omega}z^2 H(\nu) \,d\mathcal{H}^{n-1}}{\ds \int_{\Omega} z^2\,dx} = \lambda_{1,\beta}(\Omega^{\star}).
	\end{split}
\end{equation*}
\end{proof}

\section{Conclusions and open problems}
As in the euclidean case, we have proved that these comparison results depend on the dimension of the space. In particular if we are in the hypohtesis of theorem \ref{FNM2}, when $n=2$, then
\begin{equation*}
	\|u\|_{L^1 (\Omega)} \le \|v\|_{L^1(\Omega^{\star})},
\end{equation*}
and
\begin{equation*}
\|u\|_{L^2 (\Omega)} \le \|v\|_{L^2(\Omega^{\star})}.
\end{equation*}
Therefore a question arises spontaneously. Is it true that
\begin{equation} \label{pestimates}
	\|u\|_{L^p (\Omega)} \le \|v\|_{L^p(\Omega^{\star})}
\end{equation}
for all values of $p$? In dimension $2$ the answer is negative for large values of $p$. Next example will show that \eqref{pestimates} is untrue when $p=\infty$ and $n=2$.

\begin{exa} \label{ex1}
Let $\Omega$ be the union of two disjoint bidimensional Wulff shapes $\mathcal{W}$ and $\mathcal{W}_r$, with radii $1$ and $r$ respectively. If we choose $\beta = \frac{1}{2}$ and $f$ such that it is constantly $1$ in $\mathcal{W}$ and constantly zero in $\mathcal{W}_r$, then the solutions to problem \eqref{ATPR} and \eqref{SATPR} can be explicitly computed. In particular it is possible to prove that there exitsts a positive constant $c$ such that
\begin{equation*}
	\|u\|_{L^{\infty}(\Omega)} - \|v\|_{L^{\infty}(\Omega^{\star})} = cr^2 + o(r^2).
\end{equation*}
\end{exa}

Now someone could ask if \eqref{pestimates} can be true when $n\ge 3$. Next counterexample will show its untruthfulness  when $n=3$ and $p=2$.
\begin{exa}
If we consider $\Omega$, $\beta$ and $f$ as in the example \ref{ex1} in the corresponding three-dimensional case, then, in the hypothesis of theorem \ref{FNM2}, the solutions to problem \eqref{ATPR} and \eqref{SATPR} can be explicitly computed. It is possible to prove that there exists a positive constant $d$ such that
\begin{equation*}
\|u\|_{L^2(\Omega)} - \|v\|_{L^2(\Omega^{\star})} = dr^3 + o(r^3).
\end{equation*}
\end{exa}
A problem that is still open is the following 
\begin{oppr}
	In the hypothesis of theorem \ref{FNM2}, \eqref{pestimates} is true for $p=1$ and $n\ge 3$?
\end{oppr}

If we now consider the  theorem \ref{N2}, we have proved that when $n=2$ and $f\equiv 1$ in $\Omega$, then
\begin{equation*} \label{starestimate}
	u^{\star}(x)\le v(x) \,\,\,\, x \in \Omega^{\star}.
\end{equation*}
In doing so, another question arises:
\begin{oppr}
In the hypothesis of theorem \ref{N2}, \eqref{starestimate} is true even when $n\ge 3$?
\end{oppr}
\small{

\end{document}